\newtheorem{theorem}{Theorem}[section]
\newtheorem{lemma}[theorem]{Lemma}
\newtheorem{definition}[theorem]{Definition}
\newtheorem{proposition}[theorem]{Proposition}
\newtheorem{corollary}[theorem]{Corollary}
\newtheorem*{corollary*}{Corollary}
\theoremstyle{remark}
\newtheorem*{remark}{Remark}
\numberwithin{equation}{section}
\newcommand{\R}{\mathbb{R}}
\newcommand{\abs}[1]{\ensuremath{\lvert #1 \rvert}}
\begin{document}

\title{On the number of cycles in a random permutation}

\author[K. Maples]{Kenneth Maples}
\address{Institut f\"ur Mathematik\\ Universit\"at Z\"urich\\ Winterthurerstrasse 190\\ 8057-Z\"urich,
Switzerland}
\email{kenneth.maples@math.uzh.ch}

\author[A. Nikeghbali]{Ashkan Nikeghbali}
\address{Institut f\"ur Mathematik\\ Universit\"at Z\"urich\\ Winterthurerstrasse 190\\ 8057-Z\"urich,
Switzerland}
\email{ashkan.nikeghbali@math.uzh.ch}

\author[D. Zeindler]{Dirk Zeindler}
\address{
Sonderforschungsbereich 701\\
Fakult\"at f\"ur Mathematik\\
Universit\"at Bielefeld\\
Postfach 10 01 31\\
33501 Bielefeld\\
Germany} 
\email{zeindler@math.uni-bielefeld.de}

\subjclass[2000]{Primary 60C05; Secondary 60F05, 60F10}

\maketitle

\thispagestyle{empty}

\begin{abstract}
We show that the number of cycles in a random permutation chosen according to generalized Ewens measure is normally distributed and compute asymptotic estimates for the mean and variance.
\end{abstract}

\section{Introduction}

Let $S_n$ denote the permutation group on $n$ letters. For each permutation $\sigma \in S_n$, we write $c_j(\sigma)$ for the number of disjoint cycles of length $j$ in $\sigma$.  For any permutation, we let $K_{0n}(\sigma) := \sum_{j=1}^n c_j(\sigma)$ denote the number of cycles in $\sigma$. 

We are interested in the statistics of permutations produced in a random way. Random (uniform) permutations and their cycle structures have received much attention and have a long history (see e.g. the first chapter  of \cite{ABT02} for a detailed account with references). The literature on the topic has  grown quickly in recent years in relation to mathematical biology and theoretical physics, where models of non-uniform permutations are considered (see e.g. \cite{BeUe09, BeUe10, BeUeVe11, ErUe11}).  We will restrict our attention to random permutations with cycle weights  as considered in the recent work of Betz, Ueltschi and Velenik \cite{BeUeVe11} and Ercolani and Uelstschi \cite{ErUe11}. These are families of probability measures on $S_n$ that are constant on conjugacy classes with the distribution
\[
  \mathbb{P}(\sigma) := \frac1{h_n n!} \prod_{j=1}^n \theta_j^{c_j(\sigma)}
\]
where $\theta_j \geq 0$ is a given sequence of weights and $h_n$ is a normalization constant. If $\theta_j = 1$ for all $j$ then this is the uniform measure on $S_n$, while if $\theta_j = \theta_0$ is constant then this gives Ewens measure, which plays an important role in mathematical biology.

A situation of interest which appears in the study of the quantum gas in statistical mechanics is when the asymptotic behavior of $\theta_j$ is fixed for large $j$ (see \cite{BeUeVe11} and \cite{ErUe11}). Natural important historical questions arise on the behavior of $c_j(\sigma)$ or  $K_{0n}(\sigma)$. For instance, it is known that under the Ewens measure, or in special cases of weighted random permutations, the cycle counts  $(c_j(\sigma))_j$ converge to independent Poisson distributions (see \cite{ABT02} for the Ewens measure and \cite{ErUe11} and \cite{NikZei11} for weighted random permutations). The case of $K_{0n}(\sigma)$ is in fact more delicate and less results are available in the general case with cycle weights. It is well known that under the Ewens measure $K_{0n}(\sigma)$ satisfies a central limit theorem (see \cite{ABT02} for details and historical references). The methods used in this case are very probabilistic and rely on the Feller coupling. However, the Feller coupling does not exist in the model of random permutations with cycle weights. Ercolani and Ueltschi (\cite{ErUe11}) used generating series and refined saddle point analysis to obtain some asymptotic estimates for the mean of  $K_{0n}(\sigma)$ in some special cases but were not able to prove any central limit theorem. In \cite{NikZei11} the second and third authors used generating series and singularity analysis to prove a central limit theorem and some large deviations estimates in the cases where the generating series exhibit some logarithmic singularities, but the important cases corresponding to subexponential and algebraic growth of the generating series were still open (see the corollaries below for a more precise statement). In this paper we propose yet another, but more elementary, approach based on Cauchy's integral theorem for analytic functions to solve these problems.

 More precisely, with a sequence $\theta = \{\theta_j\}_{j=1}^\infty$ fixed, we write
\[
  g_\theta(t) = \sum_{k=1}^\infty \frac{\theta_k}{k} t^k
\]
for the indicated generating function. 
We will always assume that the series for $g_\theta$ converges in a neighborhood of the origin. 
We will also require that $g_\theta$ satisfies a technical condition which we call $\log$-admissibility, which will be defined in Section~\ref{sec:estmgf}.

Our main result is the following.
\begin{theorem}[Central Limit Theorem for $K_{0n}$] \label{thm:main}
	Suppose that $g_\theta(t)$ is defined for $t \in [0,1)$ with $g_\theta(t) \to \infty$ as $t \to 1$. 
Suppose further that $g$ is $\log$-admissible (see Definition~\ref{def:admissible}). Then there are sequences $\mu_n$ and $\sigma_n$ such that
\[
  \frac{K_{0n} - \mu_n}{\sigma_n}
\]
converges to a standard normal distribution.
\end{theorem}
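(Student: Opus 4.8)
The plan is to compute the probability generating function of $K_{0n}$ in closed form as a ratio of Taylor coefficients, and then extract its asymptotics by Cauchy's formula together with the saddle point method, with $\log$-admissibility supplying the necessary analytic control. The starting point is the exponential formula for weighted permutations: counting permutations by cycle type gives
\[
  \sum_{n\ge 0} h_n(s)\, z^n = \exp\!\bigl(s\, g_\theta(z)\bigr), \qquad h_n(s) := [z^n]\exp\!\bigl(s\, g_\theta(z)\bigr),
\]
since replacing each weight $\theta_j$ by $s\theta_j$ replaces $g_\theta$ by $s\,g_\theta$. Because the measure is a product over cycle types, this immediately yields
\[
  \mathbb{E}\!\left[s^{K_{0n}}\right]
  = \frac1{h_n\, n!}\sum_{\sigma\in S_n}\prod_{j=1}^n (s\theta_j)^{c_j(\sigma)}
  = \frac{h_n(s)}{h_n(1)},
\]
so that the whole problem reduces to understanding the coefficients $h_n(s)$ for $s$ near $1$.

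First I would write each coefficient as a contour integral over a circle $|z|=r_n$,
\[
  h_n(s)
  = \frac{r_n^{-n}}{2\pi}\int_{-\pi}^{\pi}
    \exp\!\bigl(s\, g_\theta(r_n e^{i\phi}) - i n\phi\bigr)\,d\phi,
\]
choosing $r_n\in(0,1)$ to be the saddle point for the $s=1$ integrand, i.e. the solution of $a(r_n)=n$ where $a(r):=r\,g_\theta'(r)$. The key simplification is to use the \emph{same} radius $r_n$ for every $s$, so that the ratio $h_n(s)/h_n(1)$ never requires tracking a moving saddle. Writing also $b(r):=r\,g_\theta'(r)+r^2 g_\theta''(r)$, the Taylor expansion of the exponent around $\phi=0$ reads $s\,g_\theta(r_n) + i\phi\,(s\,a(r_n)-n) - \tfrac{\phi^2}{2}\,s\,b(r_n) + \cdots$, and the saddle condition $a(r_n)=n$ cancels the linear term when $s=1$.

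The analytic heart of the argument is the saddle point estimate itself, and this is exactly where $\log$-admissibility (Definition~\ref{def:admissible}) is used. I would split each integral into a major arc $|\phi|\le\delta_n$ and a minor arc $\delta_n<|\phi|\le\pi$. On the major arc the exponent is well approximated by its quadratic Taylor polynomial, so the integral is asymptotically Gaussian; on the minor arc $\log$-admissibility should force $\mathrm{Re}\,g_\theta(r_n e^{i\phi})$ to drop sufficiently far below $g_\theta(r_n)$ that the contribution is negligible compared to the main term. Carrying this out for numerator and denominator and completing the square in the linear term of the numerator yields, after taking the ratio,
\[
  \log\mathbb{E}\!\left[s^{K_{0n}}\right]
  = (s-1)\,g_\theta(r_n) - \frac{n^2 (s-1)^2}{2\, s\, b(r_n)} - \tfrac12\log s + o(1),
\]
uniformly for $s$ in a shrinking neighborhood of $1$.

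Finally I would substitute $s=e^{it/\sigma_n}$ and read off the theorem. Expanding the displayed formula to second order identifies the first two cumulants: the mean is $\mu_n = g_\theta(r_n)$ up to lower-order corrections, and the variance is $\sigma_n^2 = g_\theta(r_n) - n^2/b(r_n)$ (one checks on the Ewens case $g_\theta(t)=-\log(1-t)$ that both behave like $\log n$, as expected). With these choices the expansion gives $\log\mathbb{E}\bigl[\exp\!\bigl(it(K_{0n}-\mu_n)/\sigma_n\bigr)\bigr]\to -t^2/2$ for each fixed $t$, provided $\sigma_n\to\infty$, and Lévy's continuity theorem then finishes the proof. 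The main obstacle is the \emph{uniform} control of the error terms: one must ensure that the minor-arc bound survives when $s=e^{it/\sigma_n}$ depends on $n$ (so that the $O(1/\sigma_n)$ perturbation of the integrand cannot spoil the exponential gain off the saddle), and that $\sigma_n\to\infty$ with all higher scaled cumulants tending to zero. Furnishing precisely these quantitative bounds is the role of the $\log$-admissibility hypothesis.
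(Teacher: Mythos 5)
Your overall architecture---expressing the normalized transform of $K_{0n}$ as a ratio of Taylor coefficients of $\exp(s\,g_\theta)$, extracting each coefficient by Cauchy's formula with a saddle-point split into major and minor arcs, and matching the first two cumulants---is the same as the paper's, and your cumulant formulas are in fact identical to theirs: since $b(r_x)=r_x/r_x'$, your $\sigma_n^2=g(r_n)-n^2/b(r_n)$ is exactly the paper's $g(r_n)+n\eta_1(n)$. But there is a genuine gap at the final step: you substitute the \emph{complex} value $s=e^{it/\sigma_n}$ and invoke L\'evy's continuity theorem, and $\log$-admissibility cannot deliver the minor-arc bound for complex $s$. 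The monotonicity hypothesis controls only $\Re g_\theta(re^{i\varphi})$, which bounds $\abs{\exp(s\,g_\theta(re^{i\varphi}))}=\exp(\Re s\,\Re g_\theta - \Im s\,\Im g_\theta)$ only when $\Im s=0$. For $\Im s\approx t/\sigma_n$ the extra term $\Im s\,\Im g_\theta(re^{i\varphi})$ is uncontrolled except by the trivial bound $\abs{\Im g_\theta}\leq g_\theta(r)$, contributing a factor $\exp(\abs{t}\,g_\theta(r_n)/\sigma_n)$ against an exponential gain of only $\exp(-\Re s\,\delta(r_n)^2 b(r_n)/2)$ off the saddle. In the subexponential case $g_\theta(r)=\exp(1-r)^{-\beta}$ this loses: writing $f=(1-r)^{-\beta}$, the approximation condition forces $\delta(r)\leq\exp(-\alpha f)$ with $\alpha\geq 1/3$, so $\delta^2 b\lesssim \exp((1-2\alpha)f)\cdot(\text{alg.})$ with $1-2\alpha\leq 1/3$, while $g_\theta(r_n)/\sigma_n\gtrsim \exp(f/2)\cdot(\text{alg.})$; since $1-2\alpha<1/2$, the minor arc swamps the main term. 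This is precisely why the paper remarks that it is ``crucial'' to work with the Laplace transform $\mathbb{E}\exp(-sK_{0n})$ for \emph{real} $s$: there the weight $e^{-s}$ is positive and monotonicity alone kills the minor arc. The fix is straightforward: keep $s$ real (say $s=e^{-u/\sigma_n}$, $u$ in a compact set), prove $\mathbb{E}\exp(-u(K_{0n}-\mu_n)/\sigma_n)\to e^{u^2/2}$, and conclude by the convergence theorem for moment generating functions (Curtiss) rather than L\'evy.

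A secondary difference, repairable but needing a verification you gloss over: you fix the radius $r_n$ (the $s=1$ saddle) for all $s$ and complete the square, whereas the paper moves the saddle to $r_{e^sn}$ so the linear term cancels identically. With your fixed radius the completed square shifts the Gaussian peak to $\varphi_0=(s-1)n/(s\,b(r_n))$, and you must check $\abs{\varphi_0}\ll\delta(r_n)$ so the shifted peak stays inside the major arc; for $\abs{s-1}\lesssim 1/\sigma_n$ this works in the examples (e.g.\ for algebraic growth $\varphi_0\approx n^{-(\beta+2)/(2(\beta+1))}$, which is beaten by $\delta=(1-r)^\alpha$ with $\alpha<1+\beta/2$), but it is an additional hypothesis-dependent estimate, not a consequence of completing the square. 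The paper's moving saddle avoids this at the cost of expanding $s\mapsto r_{e^sn}$ via the functions $\eta_k$ and the technical condition \eqref{eqn:technicalhelper}, which is also what supplies the uniform cubic error term $\xi(n)s^3$ that your $o(1)$ claim would need in order to survive the $n$-dependent substitution.
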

The mean and standard deviation can also be explicitly computed. For now we will state explicit results for two cases of interest, and defer the general result to Section~\ref{sec:totalnumber}.

\begin{corollary} \label{cor:genalgebraic}
    Let $g(t) = \gamma (1 - t)^{-\beta}$ for some $\beta > 0$ and $\gamma > 0$. Then $K_{0n}$ converges asymptotically to a normal distribution with mean
    \[
      \mu_n = (\beta \gamma)^{-\beta/(\beta + 1)} n^{\beta/(\beta+1)} (1 + o(1))
    \]
and variance
\[
  \sigma_n^2 = ((\beta \gamma)^{-1} - (\beta + 1)^{-1}) (\beta \gamma)^{1/(\beta + 1)} n^{\beta/(\beta + 1)} (1 + o(1)).
\]
\end{corollary}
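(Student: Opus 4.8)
The plan is to derive Corollary~\ref{cor:genalgebraic} as a special case of Theorem~\ref{thm:main}. Since the theorem guarantees the central limit theorem once $g_\theta$ is $\log$-admissible, the real work here is twofold: verify that $g(t)=\gamma(1-t)^{-\beta}$ falls under the hypotheses, and extract the explicit asymptotics for $\mu_n$ and $\sigma_n^2$ from the general formulas promised in Section~\ref{sec:totalnumber}. First I would check the hypotheses of the theorem: $g$ is clearly defined and analytic on $[0,1)$, and $g(t)\to\infty$ as $t\to1$ since $\beta>0$. I would then confirm $\log$-admissibility against Definition~\ref{def:admissible}; for a clean algebraic singularity of this type I expect the verification to be routine, as the derivatives $g'(t)$ and $g''(t)$ behave like fixed powers of $(1-t)^{-1}$ and the required growth and smoothness estimates should follow directly.

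\begin{proof}[Proof sketch]
The substantive part is the saddle-point computation. The mean is governed by a saddle point $r=r_n\in(0,1)$ determined by an equation of the form $r\,g'(r)=n$ (the expected relation fixing the "energy" at value $n$), and $\mu_n$, $\sigma_n^2$ are then read off as the leading terms of $g(r_n)$ and a related second-moment expression evaluated at the saddle. Concretely, I would compute $g'(t)=\beta\gamma(1-t)^{-\beta-1}$, so the saddle equation $r g'(r)\sim n$ gives $(1-r_n)^{\beta+1}\sim \beta\gamma/n$, hence $1-r_n\sim(\beta\gamma/n)^{1/(\beta+1)}$. Substituting into $g(r_n)=\gamma(1-r_n)^{-\beta}$ yields
\[
  \mu_n \sim \gamma\,(\beta\gamma/n)^{-\beta/(\beta+1)}
    = (\beta\gamma)^{-\beta/(\beta+1)}\,n^{\beta/(\beta+1)},
\]
matching the stated mean. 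The variance arises from the next-order fluctuation term in the general formula of Section~\ref{sec:totalnumber}, which combines $g(r_n)$ with a correction of size $r_n g'(r_n)/\bigl(r_n g'(r_n)+r_n^2 g''(r_n)\bigr)$; computing $g''(t)=\beta(\beta+1)\gamma(1-t)^{-\beta-2}$ and inserting the saddle asymptotics produces the factor $(\beta+1)^{-1}$ appearing in the variance, and collecting powers of $n$ reproduces the stated $\sigma_n^2$.
\end{proof}

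The main obstacle I anticipate is not the asymptotic bookkeeping but matching my computation precisely to the general mean-and-variance formulas of Section~\ref{sec:totalnumber}, which I have not yet seen in explicit form. In particular, the exact shape of the variance formula — whether the correction enters additively as $\bigl((\beta\gamma)^{-1}-(\beta+1)^{-1}\bigr)$ suggests — must be read off carefully, since the two competing terms of comparable order can easily be mis-scaled. I would therefore treat the saddle asymptotics $1-r_n\sim(\beta\gamma/n)^{1/(\beta+1)}$ as the organizing computation, expand each ingredient to the order needed to capture both terms in the variance, and verify the algebra reproduces the constant $\bigl((\beta\gamma)^{-1}-(\beta+1)^{-1}\bigr)(\beta\gamma)^{1/(\beta+1)}$ exactly.
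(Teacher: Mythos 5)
Your proposal follows essentially the same route as the paper's own proof (Proposition~\ref{prop:generalalgebraic} feeding into Corollary~\ref{cor:convergencetheorem}): verify $\log$-admissibility with an explicit $\delta(t)=(1-t)^\alpha$, solve the saddle equation $a(r_n)=r_n g'(r_n)=n$ to get $1-r_n\sim(\beta\gamma/n)^{1/(\beta+1)}$, and read off $\mu_n=g(r_n)(1+o(1))$ and $\sigma_n^2=(g(r_n)+n\eta_1(n))(1+o(1))$, with constants matching the statement. Two small points to fix in the write-up, the first of which you yourself flagged as the mis-scaling risk: the variance correction carries a factor of $n$, namely $n\eta_1(n)=-a(r_n)^2/b(r_n)=-n\,r_n g'(r_n)/\bigl(r_n g'(r_n)+r_n^2 g''(r_n)\bigr)$, not the bare ratio you wrote (which is $\sim(1-r_n)/(\beta+1)\to 0$ and would be negligible next to $g(r_n)$); and besides $\log$-admissibility you must also verify the paper's technical condition \eqref{eqn:technicalhelper} on the derivatives of $r_x$, which here follows from $\abs{r_n^{(k)}}\asymp n^{-1/(\beta+1)-k}$.
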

\begin{corollary} \label{cor:exponential}
    Let $g(t) = \exp (1 - t)^{-\beta}$ for some $\beta > 0$. Then $K_{0n}$ converges asymptotically to a normal distribution with mean
    \[
      \mu_n = \frac{n}{(\log n)^{1 + 1/\beta}} (1 + o(1))
    \]
    and variance
    \[
      \sigma_n^2 = (2 + \beta^{-1}) \frac{n}{(\log n)^{2 + 1/\beta}} 
    \]
\end{corollary}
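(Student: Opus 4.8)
The plan is to get the central limit theorem for free from Theorem~\ref{thm:main} and to spend the real effort on two computations: verifying that $g(t)=\exp\bigl((1-t)^{-\beta}\bigr)$ meets the hypotheses of that theorem, and extracting the explicit asymptotics of the centering and scaling sequences. The starting point is the cycle-index identity
\[
  \sum_{n\ge 0} h_n\,\mathbb{E}\bigl[s^{K_{0n}}\bigr]\,z^n=\exp\bigl(s\,g(z)\bigr),
\]
which follows from the exponential formula applied to the weight $\tfrac{\theta_k}{k}z^k$ carried by a single $k$-cycle, so that
\[
  \mathbb{E}\bigl[s^{K_{0n}}\bigr]=\frac{[z^n]\exp\bigl(s\,g(z)\bigr)}{[z^n]\exp\bigl(g(z)\bigr)}.
\]
Since $g(t)\to\infty$ as $t\to1$ is immediate, the CLT will follow once $g$ is shown to be $\log$-admissible in the sense of Definition~\ref{def:admissible}, after which it only remains to identify $\mu_n$ and $\sigma_n$, which we may take to be the mean and standard deviation of $K_{0n}$.

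First I would record the derivatives needed to test admissibility: writing $\epsilon=1-r$ one has $g'(r)=g(r)\,\beta\epsilon^{-\beta-1}$ and, more generally, $g^{(k)}(r)=g(r)\bigl(\beta\epsilon^{-\beta-1}\bigr)^{k}\bigl(1+o(1)\bigr)$, the leading term always arising from differentiating the outer exponential. With these estimates I would check the growth, smoothness and saddle-concentration conditions of Definition~\ref{def:admissible} in turn; the essential point is that every relevant quantity is, up to the factor $g(r)$, a power of $\epsilon^{-1}$, so the required monotonicity and majorization bounds reduce to elementary inequalities as $\epsilon\to0$. In the same step I would locate the saddle point $r_n\in(0,1)$, defined by $r_ng'(r_n)=n$; substituting the derivative formula gives $\beta\,(1-\epsilon)\,\epsilon^{-\beta-1}\exp(\epsilon^{-\beta})=n$, whence $\epsilon^{-\beta}=\log n-\tfrac{\beta+1}{\beta}\log\log n+O(1)$ and therefore $1-r_n\sim(\log n)^{-1/\beta}$.

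Next I would compute the mean. Differentiating the generating identity in $s$ at $s=1$ yields $\mu_n=[z^n]\bigl(g(z)e^{g(z)}\bigr)\big/[z^n]e^{g(z)}$, and a saddle-point evaluation of both coefficients (the factor $g(z)$ being slowly varying near $r_n$) gives $\mu_n\sim g(r_n)$. Feeding in the expansion of $\epsilon^{-\beta}=\log g(r_n)$ from the previous paragraph then produces the stated order $n(\log n)^{-1-1/\beta}$, with the constant read off from the $O(1)$ term in that expansion.

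The genuine obstacle is the variance, which I would obtain from the cumulant generating function $\Lambda_n(u)=\log\mathbb{E}[e^{uK_{0n}}]$ by setting $s=e^u$ and applying the saddle-point approximation uniformly in a neighbourhood of $u=0$. Because the saddle $r_u$ of $\exp(e^ug)$ moves with $u$ but the saddle condition kills the $r_u$-derivatives to first order, one finds $\Lambda_n'(u)=e^ug(r_u)$ and hence
\[
  \sigma_n^2=\Lambda_n''(0)=g(r_n)-\frac{r_n\,\bigl(g'(r_n)\bigr)^2}{g'(r_n)+r_ng''(r_n)}.
\]
The difficulty is that for this $g$ the two terms on the right are each asymptotic to $g(r_n)$ and cancel at leading order, so the variance is a genuinely lower-order quantity: I would factor out $g(r_n)$ and expand the ratio to the first nonvanishing order in $\epsilon=1-r_n$, which produces a factor of size $\epsilon^{\beta}\sim(\log n)^{-1}$ and hence a variance of order $n(\log n)^{-2-1/\beta}$. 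The two checks that make this rigorous, and which I expect to be the most delicate, are (a) that the dropped saddle-point corrections — the Gaussian normalisation factor and the higher-order terms of the expansion — contribute to $\Lambda_n''(0)$ only at an order smaller than $n(\log n)^{-2-1/\beta}$, so that they cannot spoil the delicate cancellation, and (b) that the expansion of $\epsilon$ is carried to enough precision to fix the leading constant. Matching this constant against the claimed value is then a bookkeeping exercise.
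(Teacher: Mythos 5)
Your proposal is sound and follows the same saddle-point strategy as the paper, but it executes the two crux computations differently enough to be worth comparing. The skeleton is identical: verify $\log$-admissibility (the paper takes $\delta(r)=\exp(\alpha(1-r)^{-\beta})$ with $1/3\le\alpha<1/2$), invert the saddle equation to get $(1-r_n)^{-\beta}=\log n-(1+\beta^{-1})\log\log n+O(1)$ (this is Lemma~\ref{lem:expandradius}), and take $\mu_n\sim g(r_n)$; moreover your variance expression $g(r_n)-r_n(g'(r_n))^2/\bigl(g'(r_n)+r_ng''(r_n)\bigr)$ is literally the paper's $g(r_n)+n\eta_1(n)$ of Corollary~\ref{cor:convergencetheorem}, via the identity $b(r_x)=r_x/r_x'$ established inside Proposition~\ref{prop:expand}. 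The genuine difference is how the leading-order cancellation is resolved: the paper expands $g(r_n)$ and $n\eta_1(n)$ \emph{separately} to second order through Lemma~\ref{lem:expandradius} and subtracts (the remark after Proposition~\ref{prop:subexponential} stresses that going beyond first order is the crux), whereas you factor out $g(r_n)$ and expand the ratio; with $\epsilon=1-r_n$ one gets $a(r_n)^2/\bigl(g(r_n)\,b(r_n)\bigr)=\bigl(1+\tfrac{\beta+1}{\beta}\epsilon^\beta+O(\epsilon^{\beta+1})\bigr)^{-1}$, hence $\sigma_n^2=\tfrac{\beta+1}{\beta}\,\epsilon^\beta\,g(r_n)(1+o(1))$, so the cancellation is algebraically exact rather than the difference of two long expansions. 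This is cleaner and less error-prone, and since your deferred ``bookkeeping'' of the constant is exactly where all the delicacy sits (note that the paper's own intermediate expansions in Proposition~\ref{prop:subexponential} do not visibly sum to the constant stated in the corollary), you should carry that step out in full rather than trust it. One rigor caveat: you cannot literally obtain $\sigma_n^2=\Lambda_n''(0)$ by twice differentiating a saddle-point approximation, since a multiplicative $1+o(1)$ error in the moment generating function, however uniform near $u=0$, controls no derivative at a point; the paper circumvents this in Proposition~\ref{prop:expand} by Taylor-expanding the \emph{exact} main term in $s$ at fixed $n$ with a uniform cubic remainder $O(\xi(n)s^3)$, checking via condition \eqref{eqn:technicalhelper} that the cubic term is negligible after the substitution $s\mapsto s/\sigma_n$, so the CLT follows without ever identifying $\mu_n,\sigma_n$ as the exact mean and standard deviation (your opening identification of them as such would need an a posteriori argument, e.g.\ uniform integrability from the uniform convergence of the normalized Laplace transform). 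Your check (a) is precisely this bookkeeping, so the plan closes once implemented in that form.
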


Our approach relies in the following well-known calculation of the moment generating function for $K_{0n}$.
\begin{proposition} \label{prop:mgf}
	We have the power series identity
	\[
	  \exp( e^{-s} g_\theta(t)) = \sum_{n=0}^\infty h_n \mathbb{E} \exp(-s K_{0n}) t^n
	\]
	for all $s \in \mathbb{R}$, with the conventions that $h_0=1$ and $K_{00}=0$.
\end{proposition}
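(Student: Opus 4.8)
The plan is to unwind the definition of $\mathbb{E}\exp(-sK_{0n})$ and then recognize the right-hand side as an instance of the exponential formula for the symmetric group. First I would write the expectation out directly from the distribution. Since $K_{0n}(\sigma) = \sum_j c_j(\sigma)$, the factor $e^{-sK_{0n}(\sigma)}$ simply multiplies each $\theta_j^{c_j(\sigma)}$ by $e^{-s\,c_j(\sigma)}$, so
\[
h_n n!\, \mathbb{E}\exp(-sK_{0n}) = \sum_{\sigma \in S_n} \prod_{j=1}^n (\theta_j e^{-s})^{c_j(\sigma)}.
\]
In other words, the effect of the exponential weight is to replace the sequence $\theta_j$ by the modified sequence $\theta_j e^{-s}$, so it suffices to track a single modified weight.

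Next I would group permutations by cycle type. The number of $\sigma \in S_n$ with $c_j(\sigma) = c_j$ for all $j$ (where necessarily $\sum_j j c_j = n$) is the classical conjugacy-class count $n!/\prod_j (j^{c_j} c_j!)$. Substituting this gives
\[
h_n\, \mathbb{E}\exp(-sK_{0n}) = \sum_{\sum_j j c_j = n} \prod_{j=1}^n \frac{1}{c_j!}\left(\frac{\theta_j e^{-s}}{j}\right)^{c_j}.
\]
I would then multiply by $t^n$ and sum over $n \geq 0$. Writing $t^n = \prod_j (t^j)^{c_j}$ and observing that summing over $n$ together with the constraint $\sum_j j c_j = n$ amounts to summing freely over all finitely supported sequences $(c_j)_j$, the total sum factorizes into a product over $j$ of the series $\sum_{c \geq 0} \frac{1}{c!}(\theta_j e^{-s} t^j / j)^{c} = \exp(\theta_j e^{-s} t^j / j)$. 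Collecting exponentials yields $\exp(e^{-s}\sum_j \theta_j t^j/j) = \exp(e^{-s} g_\theta(t))$, as claimed.

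The only real subtlety, and the step I would be most careful to state precisely, is the factorization of the sum into a product, that is, the interchange of sum and product. Read as an identity of formal power series in $t$ this requires no analytic input at all: for each fixed $n$ only the finitely many sequences with $\sum_j j c_j = n$ contribute to the coefficient of $t^n$, so the rearrangement is a finite one power by power. If instead one wants a genuine analytic identity for $t$ in a neighborhood of the origin, the same rearrangement is justified by the absolute convergence of $g_\theta$ there, which is already assumed. Finally, the conventions $h_0 = 1$ and $K_{00} = 0$ make the $n = 0$ term equal to $1$, which matches the constant term of the exponential on the left and completes the verification.
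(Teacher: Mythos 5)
Your proof is correct and is essentially the paper's intended argument: the paper simply cites Polya's enumeration theorem (the cycle index of $S_n$) with ``a small calculation,'' and your explicit computation --- replacing $\theta_j$ by $\theta_j e^{-s}$, summing over cycle types with the conjugacy-class count $n!/\prod_j j^{c_j} c_j!$, and factoring the generating series into $\prod_j \exp(\theta_j e^{-s} t^j/j)$ --- is precisely that calculation written out. Your remark that the interchange of sum and product is a finite rearrangement coefficient-by-coefficient (or justified analytically by the assumed convergence of $g_\theta$ near the origin) correctly handles the only subtlety.
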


This proposition follows immediately from Polya's enumeration theorem with a small calculation. More details can be found for instance in \cite[Section 4]{NikZei11}

\begin{remark}
In \cite{NikZei11} the characteristic function $\mathbb{E} \exp(i t K_{0n})$ of $K_{0n}$ was considered. In our new approach it is crucial to rather consider the Laplace transform $\mathbb{E} \exp(-s K_{0n})$ where the variable $s$ is real in order to be able to evaluate the relevant contour integrals.
\end{remark}

The outline of this article is as follows. In Section~\ref{sec:estmgf} we define $\log$-admissible $g(t)$ and derive a formula for the coefficients of its generating function. In Section~\ref{sec:totalnumber} we use the formula to compute asymptotics for $\mathbb{E} \exp(-s K_{0n})$, of which Theorem~\ref{thm:main} is a direct consequence. We also prove Corollary~\ref{cor:genalgebraic} and Corollary~\ref{cor:exponential}. Finally, in Section~\ref{sec:largedeviation} we show how the proof of Theorem~\ref{thm:main} can be modified to give large deviation estimates for $K_{0n}$.

\subsection*{Notation}

We will also freely employ asymptotic notation as follows; let $f$, $g$, and $h$ be arbitrary functions of a parameter $n$. 
Then we write $f = O(g)$ to indicate the existence of a constant $C$ and threshold $n_0$ such that for all $n > n_0$, $\abs{f(n)} \leq C \abs{g(n)}$; 
the constant and threshold may be different in each use of the notation. We also write $f = h + O(g)$ to indicate $\abs{f - h} = O(g)$. 
We similarly write $f = o(g)$ to indicate that
\[
  \lim_{n \to \infty} \frac{f(n)}{g(n)} = 0.
\]
It is convenient to also employ Vinogradov notation: we write $f \lesssim g$ (and equivalently $g \gtrsim f$) for $f = O(g)$.

\section{Estimates for the moment generating function} \label{sec:estmgf}

We shall now introduce the class of functions $g_\theta(t)$ where we can compute the asymptotic behavior of $\mathbb{E}\exp(-s K_{0n})$.
\begin{definition} \label{def:admissible}
Let $g(t) = \sum_{n=0}^\infty g_n t^n$ be given with radius of convergence $\rho > 0$ and $g_n \geq 0$. We say that $g(t)$ is \emph{$\log$-admissible} 
if there exist functions $a,b,\delta:[0,\rho) \to \R^+$ and $R : [0,\rho) \times (-\pi/2, \pi/2) \to \R^+$ with the following properties.
\begin{description}
  \item[approximation] For all $\abs{\varphi} \leq \delta(r)$ we have the expansion
        \begin{align}
          g(re^{i\varphi})
          =
          g(r) + i\varphi a(r)-\frac{\varphi^2}{2} b(r)
          + R(r,\varphi)
          \label{eq_admissible_expansion}
        \end{align}
        where $R(r,\varphi) = o(\varphi^3 \delta(r)^{-3})$ and the implied constant is uniform.
  \item[divergence] $a(r) \to \infty$, $b(r) \to \infty$ and $\delta(r) \to 0$ as $r \to \rho$.
  \item[width of convergence] For all $\epsilon > 0$, we have $\epsilon\delta^2(r)b(r) - \log b(r) \to \infty$ as $r \to \rho$.
  \item[monotonicity] For all $\abs{\varphi} > \delta(r)$, we have
        \begin{align}
            \Re g(re^{i\varphi}) \leq \Re g(re^{\pm i\delta(r)}).
        \end{align}
\end{description}
\end{definition}
These properties can be interpreted as a logarithmic analogue of Hayman-admissibility \cite[Chapter~VIII.5]{FlSe09}. 
In fact, if $g(t)$ is $\log$-admissible then $\exp g(t)$ is Hayman-admissible. 
We have also introduced the $\epsilon$ term in the width condition, which is required for uniformity in the error term of Proposition~\ref{prop:generalasymptotic}.

The approximation condition allows us to compute the functions $a$ and $b$ exactly.  We have
\begin{align}
  a(r) &= rg'(r), \\
  b(r) &= rg'(r) +r^2 g''(r).
\end{align}
Clearly $a$ and $b$ are strictly increasing real analytic functions in $[0, \rho)$. 
The error in the approximation can similarly be bounded, so that
\[
  R(r,\varphi) \lesssim rg'(r) + 3r^2 g''(r) + r^3 g'''(r).
\]

Note that for Ewens measure ($\theta_j = 1$ for all $j \geq 0$), we have $g(r) = - \log(1 - r)$, so we can compute
\begin{align*}
 b(r) = \frac{r}{(1-r)^2} \text{ and } R(r,\varphi) \approx \frac{r^2 + r}{(1-r)^3}.
\end{align*}
Therefore $g(r) = - \log(1-r)$ is not $\log$-admissible and we cannot apply this method to such distributions.

We will frequently require the inverse function of $a$ on the interval $[0,\rho)$, and define $r_x$ to be the (unique) solution to $a(r) = x$ there.
It is easy to see that $r_x$ is strictly increasing real analytic function in $x$ and $r_x \to \rho $ if $x$ tends to infinity.
 
We now define for $s \in \R$ the sequence $G_{n,s}$ by
\begin{align}\label{eq:generating_general}
  \sum_{n=0}^\infty G_{n,s} t^n = \exp (e^{-s} g(t)).
\end{align}

If $g(t)$ is $\log$-admissible, then we can compute the asymptotic behavior of $G_{n,s}$ for $n\to \infty$.
The generating function in Proposition~\ref{prop:mgf} has the same form as \eqref{eq:generating_general} and we thus can 
compute also the asymptotic behavior of the moment generating function for $K_{0n}$ if $g_\theta$ is $\log$-admissible.

\begin{proposition} \label{prop:generalasymptotic}
Let $s\in \R$ and let $g$ be $\log$-admissible with associated functions $a$, $b$. 
Let further be $r_x$ to be the (unique) solution of $a(r) = x$.

Then $G_{n,s}$ has the asymptotic expansion
\[
 G_{n,s} = \frac{1}{\sqrt{2 \pi}} e^{s/2} r_{e^s n}^{-n} b(r_{e^s n})^{-1/2} \exp(e^{-s} g(r_{e^s n})) (1 + o(1))
\]
as $n \to \infty$ with the implied constant uniform in $s$ for $s$ bounded.
\end{proposition}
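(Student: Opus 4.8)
The plan is to extract $G_{n,s}$ via Cauchy's integral theorem and to evaluate the resulting contour integral by the saddle-point (Laplace) method, with the circle of integration chosen to pass through the saddle. Since $g$ is analytic on $\{|t| < \rho\}$, so is $\exp(e^{-s} g(t))$, and Cauchy's formula on the circle $t = r e^{i\varphi}$ gives
\[
G_{n,s} = \frac{1}{2\pi r^n} \int_{-\pi}^{\pi} \exp\bigl(e^{-s} g(r e^{i\varphi}) - i n \varphi\bigr) \, d\varphi .
\]
The first step is to fix the radius $r$. Substituting the approximation \eqref{eq_admissible_expansion}, the part of the exponent linear in $\varphi$ equals $i\varphi(e^{-s} a(r) - n)$, so I would take $r = r_0 := r_{e^s n}$, the unique solution of $a(r) = e^s n$, to annihilate it; this is precisely the saddle. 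With this choice the exponent on $|\varphi| \le \delta(r_0)$ becomes $e^{-s} g(r_0) - \tfrac{\varphi^2}{2} e^{-s} b(r_0) + e^{-s} R(r_0,\varphi)$.

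Next I would split the integral into the central range $|\varphi| \le \delta(r_0)$ and the tail $\delta(r_0) < |\varphi| \le \pi$. On the central range the leading behaviour is Gaussian: pulling out $\exp(e^{-s} g(r_0))$, the task is to show
\[
\int_{-\delta(r_0)}^{\delta(r_0)} \exp\bigl(-\tfrac{\varphi^2}{2} e^{-s} b(r_0) + e^{-s} R(r_0,\varphi)\bigr) \, d\varphi = (1 + o(1)) \sqrt{\frac{2\pi}{e^{-s} b(r_0)}} .
\]
This rests on two estimates. First, since $R = o(\varphi^3 \delta^{-3})$ with $|\varphi|^3\delta^{-3} \le 1$ on the range, $R$ is uniformly $o(1)$, so $\exp(e^{-s} R) = 1 + O(e^{-s}|R|)$; weighting $|R| \lesssim \eta(r_0)|\varphi|^3\delta(r_0)^{-3}$ against the Gaussian and using $\delta^2 b \to \infty$ (a consequence of the width condition) shows the error contributes at a smaller order. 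Second, extending the truncated Gaussian to all of $\R$ costs only an exponentially small tail, again because $\delta^2 b \to \infty$. Evaluating $\sqrt{2\pi/(e^{-s} b(r_0))} = \sqrt{2\pi}\, e^{s/2} b(r_0)^{-1/2}$ and recombining with the prefactor $\frac{1}{2\pi r_0^n}$ yields exactly the claimed main term.

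For the tail I would bound the modulus of the integrand by $\exp(e^{-s} \Re g(r_0 e^{i\varphi}))$ and invoke the monotonicity condition, so that for $|\varphi| > \delta(r_0)$ the real part is at most $\Re g(r_0 e^{\pm i\delta(r_0)})$. Expanding this by \eqref{eq_admissible_expansion} at $\varphi = \delta(r_0)$ gives $\Re g(r_0 e^{i\delta(r_0)}) = g(r_0) - \tfrac12 \delta(r_0)^2 b(r_0) + o(1)$, whence the whole tail is $\lesssim \exp(e^{-s} g(r_0)) \exp(-\tfrac12 \delta(r_0)^2 e^{-s} b(r_0))$. Comparing with the main term, which carries the factor $b(r_0)^{-1/2}$, the tail is negligible provided $\delta^2 e^{-s} b - \log b \to \infty$; for $s$ bounded above one has $e^{-s} \ge \epsilon$ for a fixed $\epsilon > 0$, and this is precisely the width-of-convergence condition, which is also the source of the uniformity in $s$.

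I expect the principal difficulty to be neither the saddle-point heuristic nor the Gaussian evaluation, both routine, but the careful bookkeeping of the error term $R(r_0,\varphi)$ across the central range together with the requirement that every estimate hold uniformly for $s$ in a bounded set. In particular, one must verify that the single hypothesis $\epsilon\delta^2 b - \log b \to \infty$ simultaneously controls the central error $\exp(e^{-s}R) - 1$, the Gaussian tail beyond $\delta(r_0)$, and the genuine tail governed by monotonicity, with constants independent of $s$; this is exactly the role of the $\epsilon$ deliberately inserted into the definition of $\log$-admissibility.
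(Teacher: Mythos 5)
Your proposal is correct and follows essentially the same route as the paper: Cauchy's formula on the circle through the saddle $r_{e^s n}$ (chosen to cancel the linear term via $e^{-s}a(r)=n$), a Gaussian evaluation on $|\varphi|\le\delta(r)$ using the approximation and width conditions, and a tail bound via monotonicity plus the expansion at $\varphi=\delta(r)$. Your closing remark on uniformity in $s$ --- that the $\epsilon$ in the width condition exists precisely to absorb the factor $e^{-s}$ for bounded $s$ --- matches the paper's own comment following its proof.
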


\begin{proof}
We apply Cauchy's integral formula to the circle $\gamma$ centered at $0$ of radius $r$. We get
\begin{align*}
  G_{n,s} &= \frac{1}{2 \pi i} \int_{\gamma} \exp( e^{-s} g(z) ) \, \frac{dz}{z^{n+1}}
  \\
  &= \frac{1}{2\pi r^n} \int_{-\pi}^\pi \exp( e^{-s} g(re^{i\varphi}) - in\varphi ) \, d\varphi
\end{align*}
with $r = r_{e^s n}$. We now split the integral into the parts $[-\delta(r),\delta(r)]$ and $[-\pi, -\delta(r)) \cup (\delta(r), \pi]$. 
We first look at the minor arc $[-\delta(r),\delta(r)]$. By hypothesis on $g$ we can expand in $\varphi$, giving
\begin{align*}
I_1 &:= \int_{-\delta(r)}^{\delta(r)} \exp( e^{-s} g(re^{i\varphi}) - in\varphi ) \, d\varphi \\
&= \int_{-\delta(r)}^{\delta(r)}\exp( e^{-s} (g(r) + i\varphi a(r) - b(r)\varphi^2/2 + o(\varphi^3 \delta(r)^{-3}) - in\varphi)) \, d\varphi.
\end{align*}

We have $e^{-s} a(r_{e^s n}) = n$ since $r = r_{e^s n}$, which cancels the linear terms in $\varphi$. We get
\[
  I_1 = \int_{-\delta(r)}^{\delta(r)}\exp( e^{-s} (g(r) - b(r)\varphi^2/2 + \varphi^3 o(\delta(r)^{-3}) )) \, d\varphi.
\]
We now observe that $\varphi^3 o(\delta(r)^{-3}) = o(1)$ on $[-\delta(r), \delta(r)]$ as $r = r_{e^s n} \to \rho$, or equivalently as $n \to \infty$. Rearranging, we get
\[
  I_1 = \exp(e^{-s} g(r)) \int_{- \delta(r)}^{\delta(r)} \exp(-e^{-s} b(r) \varphi^2 / 2) \, d\varphi (1 + o(1)).
\]
By assumption on the width of convergence of $g$, the integral converges to
\[
  e^{s/2} b(r)^{-1/2} \int_{- \delta(r) e^{-s/2} b(r)^{1/2}}^{\delta(r) e^{-s/2} b(r)^{1/2}} \exp(-x^2/2) \, dx = \sqrt{2 \pi} e^{s/2} b(r)^{-1/2} (1 + o(1))
\]
so that
\[
  I_1 = \sqrt{2 \pi} \exp(e^{-s} g(r)) e^{s/2} b(r)^{-1/2} (1 + o(1)).
\]

Next we estimate the integral over the major arc. By the monotonicity assumption on $g$, we have
\[
  I_2 := \int_{[-\pi, -\delta(r)] \cup [\delta(r), \pi]} \exp(e^{-s} g(r e^{i\varphi}) - i n \varphi) \, d\varphi \leq 2 \pi \exp( \Re (e^{-s} g(r e^{i \delta(r)}) ))
\]
We can apply the approximation for $g$ at $\varphi = \delta(r)$, giving
\[
  \Re g(r e^{i \delta(r)}) = g(r) - \frac{\delta(r)^2}{2} b(r) + o(1)
\]
Collecting terms and rearranging,
\begin{align}\label{eq:I_2_estimates}
  I_2 &\leq 2 \pi \exp(e^{-s} g(r)) b(r)^{-1/2} \exp(- e^{-s} \delta(r)^2 b(r) / 2 + \frac12 \log b(r)) \nonumber\\
  &= o( \exp(e^{-s} g(r)) b(r)^{-1/2} )
\end{align}
where at the last step we used the width of approximation for $g$. 

Combining the estimates for $I_1$ and $I_2$, we find that
\[
  G_{n,s} = \frac{1}{\sqrt{2 \pi}} r_{e^{s} n}^{-n} e^{s/2} \exp(e^{-s} g(r_{e^s n})) (1 + o(1)).
\]
where the error term is uniform in $s$ for $s$ in a fixed compact set.
Note that all error-terms in this proof are uniform in $s$ for $s$ in a fixed compact set.
\end{proof}
Note that the $\epsilon$ in the definition of $\log$-admissibility is required to make the error in \eqref{eq:I_2_estimates} uniform in $s$.

\section{The total number of cycles} \label{sec:totalnumber}

We are now ready to compute the asymptotic number of cycles in a random permutation as described in the introduction. 
We will restrict our attention to those examples in \cite{ErUe11} where the limiting behavior was not known, namely where the generating function $g_\theta$ is of the form
\[
  g_\theta(r) = \gamma (1 - r)^{-\beta}
\]
or
\[
  g_\theta(r) = \exp (1 - r)^{-\beta}.
\]
We will refer to such functions as exhibiting algebraic and sub-exponential growth, respectively.

We begin by observing that a formula for the moment generating function of $K_{0n}$ can be determined from Proposition~\ref{prop:generalasymptotic}.

\begin{corollary} \label{cor:mgfexpansion}
Let $s\in \R$, $g_\theta(t)$ be $\log$-admissible with associated functions $a$, $b$. 
Let further  $r_x$ be the (unique) solution to $a(r) = x$.

Then
\[
  h_n = \frac{1}{\sqrt{2\pi b(r_n)} r_n^n} \exp(g_\theta(r_n)) (1 + o(1))
\]
and
\[
\mathbb{E} \exp(-s K_{0n})  =  e^{s/2} \left( \frac{r_n}{r_{e^s n}} \right)^n 
\frac{\exp(e^{-s} g_\theta(r_{e^s n})) }{\exp(g_\theta(r_n))}
\left( \frac{b(r_n)}{b(r_{e^s n})} \right)^{1/2} (1 + o(1)),
\]
where the error terms are uniform in $s$ for $s$ in a fixed compact set.
\end{corollary}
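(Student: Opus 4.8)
The plan is to recognize that the sequence $G_{n,s}$ defined in \eqref{eq:generating_general} is precisely the product $h_n \, \mathbb{E}\exp(-sK_{0n})$. Comparing the generating-function identity of Proposition~\ref{prop:mgf}, namely $\exp(e^{-s}g_\theta(t)) = \sum_{n\geq 0} h_n \, \mathbb{E}\exp(-sK_{0n}) \, t^n$, with the defining relation $\sum_{n\geq 0} G_{n,s} t^n = \exp(e^{-s}g(t))$ taken for $g = g_\theta$, and matching coefficients of $t^n$, yields
\[
  G_{n,s} = h_n \, \mathbb{E}\exp(-s K_{0n}).
\]
Both formulas in the corollary will then follow from the asymptotic expansion of $G_{n,s}$ supplied by Proposition~\ref{prop:generalasymptotic}.

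First I would establish the formula for $h_n$ by specializing to $s=0$. Since $\mathbb{E}\exp(0\cdot K_{0n}) = 1$, the identity above gives $h_n = G_{n,0}$. Evaluating the expansion of Proposition~\ref{prop:generalasymptotic} at $s=0$ (so that $e^s = 1$ and hence $r_{e^s n} = r_n$) produces
\[
  h_n = G_{n,0} = \frac{1}{\sqrt{2\pi}} \, r_n^{-n} \, b(r_n)^{-1/2} \, \exp(g_\theta(r_n)) \, (1+o(1)),
\]
which is exactly the claimed expression after collecting the factors $\sqrt{2\pi}$, $b(r_n)^{1/2}$ and $r_n^{n}$ into a single denominator.

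For the moment generating function itself, the idea is simply to take the ratio
\[
  \mathbb{E}\exp(-s K_{0n}) = \frac{G_{n,s}}{h_n} = \frac{G_{n,s}}{G_{n,0}},
\]
and substitute the full expansion of Proposition~\ref{prop:generalasymptotic} into both numerator and denominator. The prefactors $1/\sqrt{2\pi}$ cancel, the factor $e^{s/2}$ survives from the numerator, the powers of $r$ combine to $(r_n/r_{e^s n})^n$, the factors $b(\cdot)^{-1/2}$ combine to $(b(r_n)/b(r_{e^s n}))^{1/2}$, and the exponential factors give the quotient $\exp(e^{-s} g_\theta(r_{e^s n}))/\exp(g_\theta(r_n))$. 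Assembling these reproduces the stated formula.

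The one point requiring care --- and the only real obstacle --- is the treatment of the error terms. The expansion in Proposition~\ref{prop:generalasymptotic} carries an error factor $(1+o(1))$ that is uniform in $s$ over any fixed compact set, while the denominator $G_{n,0}$ contributes an error factor $(1+o(1))$ depending on $n$ alone. Since the denominator error tends to $1$, the quotient $(1+o(1))/(1+o(1))$ is again $(1+o(1))$, and the uniformity in $s$ of the numerator error is inherited by the ratio. This is precisely why the $\epsilon$-strengthened width condition was built into the definition of $\log$-admissibility and into Proposition~\ref{prop:generalasymptotic}: it guarantees the uniformity needed here, so that the resulting estimate for $\mathbb{E}\exp(-sK_{0n})$ holds with an error uniform in $s$ on compact sets. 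That uniformity is what will later permit differentiating in $s$ to extract the mean and variance in Theorem~\ref{thm:main}.
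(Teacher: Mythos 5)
Your proof is correct and follows essentially the same route as the paper: identify $G_{n,s} = h_n\,\mathbb{E}\exp(-sK_{0n})$ by matching coefficients via Proposition~\ref{prop:mgf}, apply Proposition~\ref{prop:generalasymptotic} at $s=0$ to obtain $h_n$, and then at general $s$ (dividing by $h_n$) to obtain the moment generating function, with the uniformity in $s$ on compact sets carried over from the proposition. Your added remarks on why the ratio of $(1+o(1))$ factors is again $(1+o(1))$ uniformly in $s$ are a correct elaboration of details the paper leaves implicit.
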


\begin{proof}
By Proposition~\ref{prop:mgf}, we have
\[
  h_n \mathbb{E} \exp(-s K_{0n}) = G_{n,s}.
\]
Apply Proposition~\ref{prop:generalasymptotic} at $0$ to get the desired formula for $h_n$, and apply it again at $s$ to find the formula for $\mathbb{E} \exp(-s K_{0n})$.
\end{proof}

\subsection{A simple example}

Before we consider more complicated functions, we will illustrate the method with $g_\theta$ given by the equation
\[
  g_\theta(t) = \frac{1}{1 - t}.
\]
This generating function corresponds to the sequence $\theta_k = k$.

Our first step is to compute the moment generating function of $K_{0n}$ by finding an asymptotic expansion of the formula in Corollary~\ref{cor:mgfexpansion}.

\begin{proposition} \label{prop:algebraic}
Let $g_\theta(t) = (1-t)^{-1}$.  Then $g_\theta$ is admissible,
\[
  h_n = \frac{1}{\sqrt{4 \pi}} n^{-3/4} \exp(2 \sqrt{n} (1 + o(1))),
\]
and
\[
  \mathbb{E} \exp(-s K_{0n}) = e^{-s/4} \exp(2 \sqrt{n} (e^{-s/2} - 1) (1 + o(1)))
\]
where the errors are uniform in $s$ for $s$ in a fixed compact set.
\end{proposition}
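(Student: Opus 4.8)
The plan is to verify that $g_\theta(t)=(1-t)^{-1}$ satisfies Definition~\ref{def:admissible}, and then to read off the two asymptotic formulas by substituting the explicit $a$, $b$, and $r_x$ into Corollary~\ref{cor:mgfexpansion}. First I would record the elementary derivatives. Since $g(r)=(1-r)^{-1}$, the Taylor expansion of $g(re^{i\varphi})$ in $\varphi$ identifies $a(r)=rg'(r)=r(1-r)^{-2}$ and $b(r)=rg'(r)+r^2g''(r)=r(1-r)^{-2}+2r^2(1-r)^{-3}$, so that $a(r)\sim(1-r)^{-2}$ and $b(r)\sim 2(1-r)^{-3}$ as $r\to1$ (with $\rho=1$); the divergence of $a$ and $b$ is then immediate. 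For the remainder I would differentiate $g(re^{i\varphi})$ three times in $\varphi$ and use $\abs{1-re^{i\varphi}}\ge 1-r$ to obtain the uniform bound $R(r,\varphi)\lesssim\varphi^3(1-r)^{-4}$ for $\abs\varphi\le\delta(r)$.

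The delicate point is the choice of $\delta$, which I expect to be the main obstacle in the admissibility part: the approximation condition forces $(1-r)^{-4}\delta^3\to0$, i.e. $\delta\ll(1-r)^{4/3}$, whereas the width condition forces $\delta^2(1-r)^{-3}\to\infty$ (so as to beat $\log b\sim 3\log\frac1{1-r}$), i.e. $\delta\gg(1-r)^{3/2}$. These two requirements pull in opposite directions, so the thing to check is that the window is non-empty; since $\tfrac43<\tfrac32$, any $\delta(r)=(1-r)^\alpha$ with $\tfrac43<\alpha<\tfrac32$ (e.g. $\alpha=\tfrac75$) works, and $\delta(r)\to0$ trivially. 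Monotonicity is then a direct computation: writing $c=\cos\varphi$,
\[
  \Re g(re^{i\varphi})=\frac{1-rc}{1-2rc+r^2},
\]
whose derivative in $c$ equals $r(1-r^2)(1-2rc+r^2)^{-2}>0$, so $\Re g(re^{i\varphi})$ decreases in $\abs\varphi$ and the required inequality holds whenever $\abs\varphi>\delta(r)$.

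Next I would invert $a$. Setting $u=1-r_x$, the equation $a(r_x)=x$ becomes $xu^2+u-1=0$, whence $v_x:=1/u=\tfrac12\bigl(1+\sqrt{1+4x}\bigr)=\sqrt x+\tfrac12+O(x^{-1/2})$. Taking $x=n$ gives $v_n=\sqrt n+\tfrac12+o(1)$ and $b(r_n)\sim 2v_n^3\sim 2n^{3/2}$, so $\sqrt{2\pi b(r_n)}\sim\sqrt{4\pi}\,n^{3/4}$, which already produces the prefactor $\tfrac1{\sqrt{4\pi}}n^{-3/4}$ in $h_n$. For the exponent I would expand $-n\log r_n+g(r_n)=v_n+n\bigl(u+\tfrac{u^2}2+\cdots\bigr)$ and substitute the relation $n=v_n^2-v_n$; the resulting geometric-type series telescopes to $2v_n-\tfrac12+O(v_n^{-1})=2\sqrt n+O(1)$, giving $h_n=\tfrac1{\sqrt{4\pi}}n^{-3/4}\exp(2\sqrt n(1+o(1)))$.

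For the moment generating function I would apply Corollary~\ref{cor:mgfexpansion} and treat its four factors separately, with $x=n$ and $x=e^s n$. The ratio $(b(r_n)/b(r_{e^s n}))^{1/2}\sim(v_n/v_{e^s n})^{3/2}\sim(e^{-s/2})^{3/2}=e^{-3s/4}$ combines with the explicit $e^{s/2}$ to give the prefactor $e^{-s/4}$. Repeating the exponent computation at $x=e^s n$, where now $n=e^{-s}(v_{e^s n}^2-v_{e^s n})$, yields $e^{-s}g(r_{e^s n})-n\log r_{e^s n}=2e^{-s/2}\sqrt n+O(1)$, and subtracting the $h_n$-exponent leaves $2\sqrt n(e^{-s/2}-1)+O(1)$. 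The recurring obstacle is the careful bookkeeping of subleading terms: one must confirm that the $\sqrt n$-coefficients assemble into exactly $2(e^{-s/2}-1)$ and that all remaining contributions are $O(1)$, hence absorbed into the relative error $(1+o(1))$ of the displayed exponent. Uniformity in $s$ on compact sets is inherited from Proposition~\ref{prop:generalasymptotic} together with the uniform expansion of $v_{e^s n}$.
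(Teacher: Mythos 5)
Your proposal is correct and follows essentially the same route as the paper's proof: verify $\log$-admissibility by computing $a$, $b$, and the remainder bound and choosing $\delta(r)=(1-r)^\alpha$ with $4/3<\alpha<3/2$ (your strict upper bound is in fact the right one, since at $\alpha=3/2$ the quantity $\epsilon\delta^2 b-\log b$ tends to $-\infty$; the paper's ``$\alpha\le 3/2$'' is a slip), then invert $a(r_x)=x$ and substitute into Corollary~\ref{cor:mgfexpansion}. If anything you are more careful than the paper, which takes monotonicity as obvious and works only with $r_x=1-x^{-1/2}(1+o(1))$, whereas your exact inversion $v_x=\tfrac12\bigl(1+\sqrt{1+4x}\bigr)$ together with the identity $x=v_x^2-v_x$ makes the $O(1)$ bookkeeping in the exponents (and hence the uniformity of the error for $s$ in a compact set, including near $s=0$) fully transparent.
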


We will prove Proposition~\ref{prop:algebraic} in a moment. We first see how to use this result to prove a central limit theorem for $K_{0n}$.

\begin{corollary} \label{cor:algebraic}
Let $g_\theta(t) = (1-t)^{-1}$. Then
\[
  \frac{K_{0n} - n^{1/2}}{2^{-1/2} n^{1/4}} \stackrel{d}{\longrightarrow} N.
\]
where $N$ is the standard normal distribution.
\end{corollary}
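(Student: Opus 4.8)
The plan is to apply the method of moment generating functions (the continuity theorem for Laplace transforms). Corollary \ref{cor:mgfexpansion} and Proposition \ref{prop:algebraic} already hand us the asymptotic behavior of $\mathbb{E} \exp(-s K_{0n})$ uniformly for $s$ in a fixed compact set, so the task reduces to centering and scaling correctly and then showing that the transform of the normalized variable converges pointwise to the transform $e^{s^2/2}$ of a standard normal.

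\emph{First}, I would introduce the centered and scaled variable
\[
  W_n := \frac{K_{0n} - \mu_n}{\sigma_n}, \qquad \mu_n = n^{1/2}, \quad \sigma_n = 2^{-1/2} n^{1/4},
\]
and compute its Laplace transform $\mathbb{E} \exp(-s W_n)$ for fixed real $s$. Writing $\mathbb{E}\exp(-sW_n) = \exp(s\mu_n/\sigma_n)\,\mathbb{E}\exp\bigl(-(s/\sigma_n) K_{0n}\bigr)$, I would substitute $s \mapsto s/\sigma_n$ into the formula from Proposition \ref{prop:algebraic}. The key point is that $s/\sigma_n \to 0$ as $n \to \infty$, so I may Taylor-expand the exponent.

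\emph{Second}, I would carry out this expansion. Setting $u = s/\sigma_n = \sqrt{2}\, s\, n^{-1/4}$, the dominant factor is
\[
  \exp\bigl(2\sqrt{n}\,(e^{-u/2} - 1)\bigr),
\]
and expanding $e^{-u/2} - 1 = -\tfrac{u}{2} + \tfrac{u^2}{8} - \tfrac{u^3}{48} + \cdots$ gives
\[
  2\sqrt{n}\,(e^{-u/2}-1) = -\sqrt{n}\,u + \tfrac{\sqrt{n}}{4} u^2 + O(\sqrt{n}\,u^3).
\]
With $u = \sqrt{2}\, s\, n^{-1/4}$ one checks $\sqrt{n}\,u = \sqrt{2}\, s\, n^{1/4} = s\mu_n/\sigma_n$, so this linear term exactly cancels the centering factor $\exp(s\mu_n/\sigma_n)$; the quadratic term gives $\tfrac{\sqrt{n}}{4}\cdot 2 s^2 n^{-1/2} = \tfrac{s^2}{2}$; and the cubic remainder is $O(n^{1/2}\cdot n^{-3/4}) = O(n^{-1/4}) \to 0$. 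The prefactor $e^{-u/4} = e^{-s/(4\sigma_n)} \to 1$ likewise, as does the $(1+o(1))$ error. Hence
\[
  \mathbb{E}\exp(-s W_n) \longrightarrow e^{s^2/2}
\]
for every fixed $s \in \R$, which is precisely the moment generating function of the standard normal $N$.

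\emph{Third}, I would invoke the convergence theorem for Laplace transforms (moment generating functions): since $\mathbb{E}\exp(-sW_n)$ converges to $e^{s^2/2}$ for all real $s$ in a neighborhood of the origin, and since the limiting transform $e^{s^2/2}$ is that of a distribution (the standard normal) with a positive radius of analyticity, it follows that $W_n \stackrel{d}{\longrightarrow} N$. The main obstacle is not conceptual but bookkeeping: I must track the cubic error term in the Taylor expansion carefully to confirm it vanishes after the $n^{1/2}$ amplification, and I must ensure the $(1+o(1))$ error from Proposition \ref{prop:algebraic} is genuinely uniform over the shrinking range of arguments $s/\sigma_n$ being plugged in. This uniformity is exactly what the ``uniform in $s$ for $s$ in a fixed compact set'' clause of Proposition \ref{prop:algebraic} guarantees, since $s/\sigma_n$ stays in a compact (indeed shrinking) set once $s$ is fixed.
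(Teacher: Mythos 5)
Your proposal is correct and takes essentially the same approach as the paper: both apply Proposition~\ref{prop:algebraic} at $s/(2^{-1/2}n^{1/4})$, Taylor-expand the exponent $2\sqrt{n}\,(e^{-u/2}-1)$ so that the linear term $-\sqrt{2}\,s\,n^{1/4}$ cancels the centering factor, leaving $s^2/2 + O(s^3 n^{-1/4})$, and conclude by the continuity theorem for moment generating functions, invoking the uniformity of the error from Proposition~\ref{prop:algebraic} just as the paper does. Your expansion bookkeeping matches the paper's displayed formula exactly.
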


\begin{proof}[Proof of Corollary~\ref{cor:algebraic}]
It suffices to show that the moment generating function of the renormalized cycle count converges to $e^{s^2/2}$ for bounded $s$ (in fact, we only need this for $s$ sufficiently small, but our theorem gives a stronger result). We apply Theorem~\ref{prop:algebraic} at $s / (2^{-1/2} n^{1/4})$ to find
\[
  \mathbb{E} \exp(-s \frac{K_{0n}} {2^{-1/2} n^{1/4}}) = \exp(- \sqrt{2} s n^{1/4} + s^2/2 + O(s^3 n^{-1/4})).
\]
Here we used the uniformity of the error for bounded $s$. Multiplying both sides by $\exp(\sqrt{2} s n^{1/4} )$ and letting $n$ tend to $\infty$ completes the proof.
\end{proof}

\begin{proof}[Proof of Theorem~\ref{prop:algebraic}]
We first show that $g_\theta(t) = (1 - t)^{-1}$ is admissible. The monotonicity condition is obvious. We compute
\[
  a(t) = \frac{t}{(1 - t)^2}
\]
and
\[
  b(t) = \frac{t}{(1 - t)^2} + \frac{2 t^2}{(1 - t)^3}
\]
and note that $a, b \to \infty$ as $t \to 1$ from the left. It suffices to choose a function $\delta(t)$ that satisfies the remaining hypotheses. We observe that the width condition on $\delta$ is satisfied if $\delta(t) \lesssim (1 - t)^{3/2 - \eta}$ for some $\eta > 0$. Likewise, for the error in the approximation condition to be satisfied we need $\delta(t)^3 (1 - t)^{-4} = o(1)$. It therefore suffices to choose $\delta(t) = (1 - t)^\alpha$ for any $4/3 < \alpha \leq 3/2$.

We first calculate $r_x$. By definition we have
\[
  \frac{r_x}{(1 - r_x)^2} = x,
\]
which can be inverted to find
\[
  r_x = 1 - x^{-1/2} (1 + o(1)).
\]
We then compute
\[
  g(r_x) = \sqrt{x} (1 + o(1))
\]
and
\[
  b(r_x) = 2 x^{3/2} (1 + o(1)).
\]
With the approximation $(1 - \eta)^n \sim \exp(- \eta n)$ for $\eta$ sufficiently small, we apply Corollary~\ref{cor:mgfexpansion} to find
\[
  h_n = \frac{1}{\sqrt{4 \pi}} n^{-3/4} \exp(2 \sqrt{n} (1 + o(1)))
\]
and
\[
  \mathbb{E} \exp(-s K_{0n}) = e^{-s/4} \exp(2 \sqrt{n} (e^{-s/2} - 1) (1 + o(1)))
\]
as required.
\end{proof}

\subsection{The general case}

The previous calculation suggests how to transform the formula in Corollary~\ref{cor:mgfexpansion} into a form that is easier to manage. We will restrict our attention to those functions $g$ where the induced functions $r_x$ satisfy a family of inequalities
\begin{equation}
  \abs{r_x^{(k)} x^k} \lesssim \abs{r_x^{(k-1)} x^{(k-1)}} \label{eqn:technicalhelper}
\end{equation}
for all $x$ sufficiently large and $2 \leq k \leq 4$. This is easy to verify in practice and avoids some technical details for functions $g$ which diverge slowly at $1$ (i.e.~slower than $(1 - r)^{-\epsilon}$ for any $\epsilon > 0$).

It is convenient to define the functions
\[
  \eta_k(x) := (-1)^k \left. \frac{\partial^k}{\partial^k s} \right|_{s=0}  \log( r_{e^sx}).
\]
For example, this gives
\[
  \eta_1(x) =  -x \frac{r_x'}{r_x} 
\ \text{ and } 
\eta_2(x) =  - \eta_1(x) + x^2 \left( \frac{r_x''}{r_x} + \left( \frac{r'_x}{r_x} \right)^2  \right).
\]

\begin{proposition} \label{prop:expand}
Let $g$ be $\log$-admissible and $r_x$ be defined as above, satisfying condition \ref{eqn:technicalhelper}. Fix $M > 0$. Then for every $-M < s < M$, we have
\begin{align*}
\log \mathbb{E} \exp(-s K_{0n}) = &- g(r_n) (1 + o(1)) s \\
                                  &+ (g(r_n) + n\eta_1(n)) (1 + o(1)) s^2/2\\
                                  &+ O(\xi(n)s^3)
\end{align*}
where
\[
  \xi(n) = \sup_{x\in[e^{-M} n,e^M n]} g(r_x) + x\abs{\eta_1(x)}.
\]
\end{proposition}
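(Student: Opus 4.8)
The plan is to start from the closed-form expression for $\mathbb{E}\exp(-sK_{0n})$ in Corollary~\ref{cor:mgfexpansion}, take logarithms, and Taylor expand in $s$ about $s=0$ to second order with an explicit third-order remainder. Writing $F(s):=\log\mathbb{E}\exp(-sK_{0n})$ and grouping the $s$-independent terms, the corollary gives
\[
  F(s) = \frac{s}{2} - \bigl(n\log r_{e^s n}-n\log r_n\bigr) + \bigl(e^{-s}g(r_{e^s n})-g(r_n)\bigr) - \frac12\bigl(\log b(r_{e^s n})-\log b(r_n)\bigr) + o(1),
\]
where the $o(1)$ is uniform for $s\in[-M,M]$ and, being constant in $s$, affects only the zeroth-order term. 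Each of the three parenthesized differences vanishes at $s=0$, so it suffices to expand them to second order.

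For the expansion I would use the functions $\eta_k$, which are by construction the Taylor coefficients of $\log r_{e^s x}$: differentiating $n\log r_{e^s n}$ produces $-n\eta_1(n)$, $n\eta_2(n)$, $-n\eta_3(n)$ at orders one through three. For the term $e^{-s}g(r_{e^s n})$ the key is the defining identity $a(r_x)=r_xg'(r_x)=x$, i.e. $g'(r_x)=x/r_x$, which renders every $s$-derivative of $g(r_{e^s n})$ a rational expression in $r_x$ and its derivatives; for instance $\frac{d}{ds}g(r_{e^s n}) = -x\,\eta_1(x)$ with $x=e^s n$. Carrying this out and combining with the $-n\log r_{e^s n}$ piece, the first-order coefficient collapses to exactly $-g(r_n)$ and the second-order coefficient to $g(r_n)+n\eta_1(n)$, matching the stated formula. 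It then remains to show that the leftover contributions are of lower order: the constant $\frac{s}{2}$ is harmless since $g(r_n)\to\infty$, and for the $\log b$ term I would invoke the identity $b(r_x)=r_x/r'_x$ (equivalently $b=ra'$), which together with~\eqref{eqn:technicalhelper} bounds its $s$-derivatives in terms of $\eta_1$ and an $O(1)$ term; combined with $\eta_1(n)=-n/b(r_n)=o(g(r_n))$, these are absorbed into the stated $(1+o(1))$ factors.

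The third-order remainder is where the technical hypothesis~\eqref{eqn:technicalhelper} is essential, and managing it is the main obstacle. By Taylor's theorem the remainder is bounded by $\sup_{\tilde s\in[-M,M]}\abs{F'''(\tilde s)}\,\abs{s}^3$, and differentiating three times expresses $F'''(\tilde s)$ through $n\eta_3(x)$, the third $s$-derivative of $e^{-s}g(r_{e^s n})$, and that of $\log b(r_{e^s n})$, all evaluated at $x=e^{\tilde s}n\in[e^{-M}n,e^M n]$. Each is a rational expression in $r_x,\dots,r_x^{(3)}$, and the inequalities $\abs{r_x^{(k)}x^k}\lesssim\abs{r_x^{(k-1)}x^{k-1}}$ for $2\le k\le 4$ let me collapse every higher derivative step by step down to the first, so that each third-order quantity is dominated by the zeroth- and first-order quantities $g(r_x)$ and $x\abs{\eta_1(x)}$. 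This yields $\abs{F'''(\tilde s)}\lesssim\sup_{x\in[e^{-M}n,e^M n]}\bigl(g(r_x)+x\abs{\eta_1(x)}\bigr)=\xi(n)$ and hence the error term $O(\xi(n)s^3)$. The delicate points I anticipate are keeping the reduction uniform across the entire window $x\in[e^{-M}n,e^M n]$ rather than only at $x=n$, and checking that the $\log b$ derivatives are genuinely swamped by $\xi(n)$ at each order; both follow from~\eqref{eqn:technicalhelper} but require careful bookkeeping of the constants.
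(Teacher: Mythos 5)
Your proposal is correct and follows essentially the same route as the paper: the same logarithmic decomposition of the three factors from Corollary~\ref{cor:mgfexpansion}, the same key identities $\frac{\partial}{\partial s}g(r_{e^s n})=-e^s n\,\eta_1(e^s n)$ (via $a(r_x)=x$) and $b(r_x)=r_x/r_x'=-x/\eta_1(x)$, and the same use of condition~\eqref{eqn:technicalhelper} to collapse $\eta_2,\eta_3,\eta_4$ against $\eta_1$ uniformly over $x\in[e^{-M}n,e^{M}n]$, yielding the exact cancellations to $-g(r_n)$ and $g(r_n)+n\eta_1(n)$ at first and second order with remainder $O(\xi(n)s^3)$. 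The only cosmetic inaccuracy is describing the $o(1)$ from the corollary as constant in $s$ — it is merely uniform in $s$ on compact sets — but since the proposition is asserted for each fixed $s$ and both sides vanish at $s=0$, this changes nothing.
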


\begin{proof}
We use Corollary~\ref{cor:mgfexpansion} and expand each factor with respect to $s$.

We start with the first term. Taking logarithms, we find
\begin{align*}
  \log (r_n r^{-1}_{e^s n})^n &= n \log(r_n) - n \log(r_{e^s n}) \\
  &= n\eta_1(n) s  - n\eta_2(n) s^2/2 + O(\xi_1(n) s^3) .
\end{align*}
where
\[
\xi_1(n) := \sup_{x\in[e^{-M} n,e^M n]} |\eta_3(x)|.
\]
Next we consider $\exp(e^{-s} g(r_{e^s n}) - g(r_n))$. We use $x = a(r_x) = r_x g'(r_x)$ and get
\[
  \frac{\partial}{\partial s} g(r_{e^s n}) = g'(r_{e^s n})r'_{e^s n} e^s n = e^{2s} n^2 \frac{r'_{e^s n}}{r_{e^s n}} = - e^s n \cdot \eta_1(e^s n).
\]
This gives 
\begin{align*}
e^{-s} g(r_{e^s n}) - g(r_n) = &- (n \eta_1(n) + g(r_n)) s\\
                               &+ (n \eta_1(n) + n \eta_2(n) + g(r_n)) s^2/2 \\
                               &+ O(\xi_2(n)s^3)
\end{align*}
where
\[
  \xi_2(n) = \sup_{x\in[e^{-M} n,e^M n]} g_r(x) + x\abs{\eta_1(x)} + x\abs{\eta_2 (x)} + x\abs{\eta_3(x)}.
\]

Finally we consider $(b(r_n) b(r_{e^s n})^{-1})^{1/2}$. Writing $b$ in terms of $a$, we get
\[
  b(r_x) = a(r_x) + r_x^2 g''(r_x) = x + r_x^2 g''(r_x)
\]
Differentiating the defining equation $  x = r_x g'(r_x) $ in $x$, we find that
\[
 1 = r_x' g'(r_x) + r_x r_x' g''(r_x) 
\]
so that
\[
r_x^2 g''(r_x) = \frac{r_x}{r_x'} - x
\]
and thus
\[
 b(r_x) = \frac{r_x}{r_x'} = \frac{-x}{\eta_1(x)}.
\]

This then gives
\[
 \log \left( \frac{b(r_n)}{b(r_{e^s n})} \right)^{1/2}
 =
s/2 - \frac{\eta_2(n)}{2\eta_1(n)} s + \left(  \frac{\eta_3(n)}{2\eta_1(n)} - \frac{\eta_2^2(n)}{2\eta_1^2(n)} \right) \frac{s^2}{2}
+ O( \xi_3(n) s^3)
\]
where
\[
  \xi_3(n) = \sup_{x\in[n,e^s n]} \left|\frac{\eta_4(x)}{ \eta_1(x)} \right|  + \left|\frac{\eta_2(x)\eta_3(x)}{ \eta_1^2(x)} \right| + \left|\frac{\eta_2^3(x)}{ \eta_1^3(x)} \right|. 
\]

We now apply the technical assumption to see that $\eta_k(n) \lesssim \eta_1(n)$ for $k = 2, 3, 4$. In particular, we see that the $(b(r_n) b(r_{e^s n})^{-1})^{1/2}$ is dominated by the other terms. Collapsing other redundant terms, we find that
\begin{align*}
\log \mathbb{E} \exp(-s K_{0n}) = &- g(r_n) (1 + o(1)) s \\
                                  &+ (g(r_n) + n\eta_1(n)) (1 + o(1)) s^2/2\\
                                  &+ O(s^3 \sup_{x\in[e^{-M} n,e^M n]} g(r_x) + x\abs{\eta_1(x)})
\end{align*}
as desired.
\end{proof}

As above, this has the following immediate corollary.

\begin{corollary} \label{cor:convergencetheorem}
Let $\theta$ be the defining sequence for a generalized Ewens measure. Suppose that $g_\theta$ is $\log$-admissible and $r_x$ satisfies the technical condition \ref{eqn:technicalhelper}. Then there are functions $\mu_n$ and $\sigma_n$ such that
    \[
      \frac{K_{0n} - \mu_n}{\sigma_n} \stackrel{d}{\longrightarrow} N(0,1)
    \]
where $\mu_n$, $\sigma_n$ satisfy the asymptotics
\[
  \mu_n = g(r_n) (1 + o(1))
\]
and
\[
  \sigma_n^2 = (g(r_n) + n\eta_1(n))(1 + o(1)).
\]
\end{corollary}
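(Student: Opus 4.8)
The plan is to follow the template of Corollary~\ref{cor:algebraic}: reduce the central limit theorem to convergence of the Laplace transform of the renormalized count to that of a standard Gaussian. Recall that if $\mathbb{E}\exp(-sY_n)\to e^{s^2/2}$ for every $s$ in a fixed neighbourhood of the origin, then $Y_n\stackrel{d}{\longrightarrow}N(0,1)$ by the continuity theorem for moment generating functions (a distribution whose Laplace transform is finite and analytic near $0$ is determined by it). So it suffices to produce $\mu_n,\sigma_n$ for which the Gaussian transform emerges. Fixing any $M>0$, I would \emph{define} $\mu_n$ and $\sigma_n^2$ to be exactly the coefficients of $s$ and of $s^2/2$ furnished by Proposition~\ref{prop:expand}, so that the asymptotics $\mu_n=g(r_n)(1+o(1))$ and $\sigma_n^2=(g(r_n)+n\eta_1(n))(1+o(1))$ hold by construction, and set $Y_n:=(K_{0n}-\mu_n)/\sigma_n$.

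For fixed $s$ the argument $s/\sigma_n$ lies in $(-M,M)$ once $n$ is large (using $\sigma_n\to\infty$, discussed below), so I may apply Proposition~\ref{prop:expand} at $s/\sigma_n$. Since
\[
  \mathbb{E}\exp(-sY_n)=\exp\!\left(\tfrac{s\mu_n}{\sigma_n}\right)\,\mathbb{E}\exp\!\left(-\tfrac{s}{\sigma_n}K_{0n}\right),
\]
taking logarithms adds $s\mu_n/\sigma_n$ to the expansion, and because $\mu_n,\sigma_n^2$ were chosen to be the precise coefficients, the linear term cancels exactly and the quadratic term is exactly $s^2/2$. This leaves
\[
  \log\mathbb{E}\exp(-sY_n)=\frac{s^2}{2}+O\!\left(\frac{\xi(n)}{\sigma_n^3}\,s^3\right),
\]
so everything reduces to showing $\xi(n)/\sigma_n^3\to0$. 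Crucially, this substitution of the shrinking argument $s/\sigma_n$ is legitimate only because the $o(1)$ and $O(\cdot)$ estimates in Proposition~\ref{prop:expand} are uniform for $\abs{s}<M$.

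The main obstacle is this final estimate. Two facts are needed. First, $\sigma_n\to\infty$: this should follow from $\log$-admissibility via the identity $b(r_x)=-x/\eta_1(x)$ established in the proof of Proposition~\ref{prop:expand}, together with the divergence condition $b(r)\to\infty$, which forces the variance proxy $g(r_n)+n\eta_1(n)$ to grow (in the worked example one finds $\sigma_n^2=n^{1/2}/2$). Second, $\xi(n)=O(\sigma_n^2)$: writing $\xi(n)=\sup_{x\in[e^{-M}n,e^{M}n]}g(r_x)+x\abs{\eta_1(x)}$, the technical condition~\ref{eqn:technicalhelper} makes $g(r_x)$ and $x\abs{\eta_1(x)}$ vary regularly across the band $[e^{-M}n,e^{M}n]$, so $\xi(n)$ is of the same order as $g(r_n)+n\abs{\eta_1(n)}$, which (absent any asymptotic cancellation) is comparable to $\sigma_n^2$. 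Combining the two gives $\xi(n)/\sigma_n^3=O(1/\sigma_n)\to0$, and the convergence $\mathbb{E}\exp(-sY_n)\to e^{s^2/2}$ follows. The delicate points to watch are the uniformity of the error and the positivity and divergence of $\sigma_n^2$, which is where the interplay between $g(r_n)$ and $n\eta_1(n)$ must be handled with care.
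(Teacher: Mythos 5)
Your architecture matches the paper's proof exactly: the paper also defines $\mu_n$ and $\sigma_n^2$ to be the precise linear and quadratic coefficients from Proposition~\ref{prop:expand}, evaluates at $s/\sigma_n$ using the uniformity in $\abs{s}<M$, and reduces everything to showing that the cubic remainder $O(\xi(n)\sigma_n^{-3})s^3$ vanishes (the paper compresses this into the single sentence that the $s^3$ coefficient is bounded by $(g(r_x)+x\abs{\eta_1(x)})^{3/2}$, ``but this is obvious''). However, your justification of that final reduction contains a genuine error: the claim $\xi(n)=O(\sigma_n^2)$ is false in precisely the setting this corollary is built to handle. You hedge with ``absent any asymptotic cancellation,'' but cancellation between $g(r_n)$ and $n\eta_1(n)$ is not a pathology here --- it is the generic situation for sub-exponential growth. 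In Proposition~\ref{prop:subexponential}, $g(r_n)$ and $-n\eta_1(n)$ agree in their first \emph{two} asymptotic orders (the paper explicitly remarks afterwards that it was crucial to expand both beyond the leading term for exactly this reason), so that $g(r_n)$ and $n\abs{\eta_1(n)}$ are both of order $n(\log n)^{-1-1/\beta}$ while
\[
  \sigma_n^2 = g(r_n)+n\eta_1(n) \asymp \frac{n}{(\log n)^{2+1/\beta}}\cdot(\text{lower-order factors}),
\]
whence $\xi(n)/\sigma_n^2 \gtrsim \log n \to \infty$. Your two-step scheme ($\sigma_n\to\infty$ and $\xi(n)=O(\sigma_n^2)$) therefore breaks on one of the two headline applications.

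What is actually needed is only the weaker bound $\xi(n)=o(\sigma_n^3)$, i.e.\ $\xi(n)/\sigma_n^2=o(\sigma_n)$, which does survive the cancellation above because $\sigma_n$ grows like a power of $n$ while the cancellation only costs powers of $\log n$. Note also that neither this bound nor the divergence $\sigma_n^2\to\infty$ follows from $\log$-admissibility together with \eqref{eqn:technicalhelper} alone: your suggested derivation of $\sigma_n\to\infty$ from $b(r)\to\infty$ via $b(r_x)=-x/\eta_1(x)$ controls $n\abs{\eta_1(n)}$, not the \emph{difference} $g(r_n)-n\abs{\eta_1(n)}$, so it proves nothing about $\sigma_n^2$. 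To be fair, the paper's own one-line proof is equally glib on this point (and even conflates $g+n\eta_1$ with $g+n\abs{\eta_1}$); the honest reading is that the corollary implicitly assumes $\xi(n)=o\bigl((g(r_n)+n\eta_1(n))^{3/2}\bigr)$, a condition verified case by case in Propositions~\ref{prop:generalalgebraic} and~\ref{prop:subexponential}. Your write-up should state that condition explicitly rather than derive it from a regularity claim that fails.
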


\begin{proof}
The only thing to check is whether the coefficient of $s^3$ is bounded by $(g(r_x) + x\abs{\eta_1(x)})^{3/2}$, but this is obvious.
\end{proof}

\subsection{Computing $g(r_n)$ and $\eta_1(n)$}

We now give two examples of how to apply Corollary~\ref{cor:convergencetheorem} to compute explicit asymptotics for given $g_\theta$. First we prove Corollary~\ref{cor:genalgebraic}.

\begin{proposition} \label{prop:generalalgebraic}
Let $g_\theta(t) = \gamma (1 - t)^{-\beta}$ for some $\beta > 0$ and $\gamma > 0$. Then $g_\theta$ is $\log$-admissible, $r_x$ satisfies the technical condition \ref{eqn:technicalhelper}, and there are asymptotic expansions
\[
  g_\theta(r_n) = (n \beta^{-1} \gamma^{-1})^{\frac{\beta}{\beta + 1}} (1 + o(1))
\]
and
\[
  n \eta_1(n) = - \frac{(\beta \gamma)^{1/(\beta + 1)}}{\beta + 1} n^{\beta/(\beta + 1)} (1 + o(1)).
\]
\end{proposition}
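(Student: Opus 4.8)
The plan is to follow the template of Proposition~\ref{prop:algebraic}, now carrying the general exponent $\beta$. First I would record the elementary derivatives $g_\theta'(t)=\gamma\beta(1-t)^{-\beta-1}$, $g_\theta''(t)=\gamma\beta(\beta+1)(1-t)^{-\beta-2}$ and $g_\theta'''(t)=\gamma\beta(\beta+1)(\beta+2)(1-t)^{-\beta-3}$. From these I read off $a(r)=rg_\theta'(r)\asymp(1-r)^{-\beta-1}$, $b(r)=rg_\theta'(r)+r^2g_\theta''(r)\asymp(1-r)^{-\beta-2}$, and the quantity controlling the approximation error, $C(r):=rg_\theta'(r)+3r^2g_\theta''(r)+r^3g_\theta'''(r)\asymp(1-r)^{-\beta-3}$, all as $r\to1$. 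The divergence condition $a,b\to\infty$, $\delta\to0$ is then immediate.

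For admissibility I would look for $\delta(r)$ of the form $\delta(r)=(1-r)^\alpha$. The approximation condition requires $\delta(r)^3C(r)=o(1)$, i.e.\ $\alpha>\tfrac{\beta+3}{3}$, while the width condition requires $\delta(r)^2b(r)/\log b(r)\to\infty$, i.e.\ $\alpha<\tfrac{\beta+2}{2}$ (here $\log b(r)\asymp\log\tfrac1{1-r}$ is negligible against the power $\delta^2b\asymp(1-r)^{2\alpha-\beta-2}$). Since $\tfrac{\beta+3}{3}<\tfrac{\beta+2}{2}$ precisely when $\beta>0$, the window is nonempty and any $\alpha$ inside it is admissible; for $\beta=1$ this recovers the range $4/3<\alpha\le3/2$ of Proposition~\ref{prop:algebraic}.

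The \emph{main obstacle} is the monotonicity condition, because $\varphi\mapsto\Re g_\theta(re^{i\varphi})=\gamma\Re(1-re^{i\varphi})^{-\beta}$ is \emph{not} globally decreasing on $[0,\pi]$ once $\beta>1$ (the factor $\cos(\beta\arg(1-re^{i\varphi}))$ oscillates), so I cannot argue by plain monotonicity. Instead I would verify $\Re g_\theta(re^{i\varphi})\le\Re g_\theta(re^{i\delta(r)})$ directly by splitting $[\delta(r),\pi]$ at the intermediate scale $\varphi_A:=(1-r)^{1+\epsilon'}$ with $0<\epsilon'<\alpha-1$ (so $\delta(r)\ll\varphi_A\ll1-r$). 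On $[\delta(r),\varphi_A]$ the explicit analyticity of $g_\theta$ keeps the expansion $\Re g_\theta(re^{i\varphi})=g_\theta(r)-\tfrac{\varphi^2}{2}b(r)+O(\varphi^3C(r))$ valid, and there the cubic remainder satisfies $\varphi^3C(r)=o(\varphi^2 b(r))$ since $\varphi C(r)/b(r)\asymp\varphi(1-r)^{-1}\le(1-r)^{\epsilon'}=o(1)$; hence the $\varphi$-derivative is $-\varphi b(r)(1+o(1))<0$ and $\Re g_\theta(re^{i\varphi})\le\Re g_\theta(re^{i\delta(r)})$. For $\varphi\ge\varphi_A$ I would use the crude bound $\Re g_\theta(re^{i\varphi})\le\gamma\abs{1-re^{i\varphi}}^{-\beta}$ together with the fact that $\abs{1-re^{i\varphi}}^2=1-2r\cos\varphi+r^2$ is increasing on $[0,\pi]$, so everything reduces to the single inequality $\gamma\abs{1-re^{i\varphi_A}}^{-\beta}\le\Re g_\theta(re^{i\delta(r)})$. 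This holds because the deficit of the modulus bound at $\varphi_A$, of order $(1-r)^{2\epsilon'-\beta}$, dominates the quadratic deficit $\tfrac12\delta(r)^2b(r)$, of order $(1-r)^{2\alpha-2-\beta}$, exactly under the assumption $\epsilon'<\alpha-1$.

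Finally, the technical condition and the two asymptotics are routine. Inverting $a(r_x)=x$ gives $1-r_x=(\gamma\beta)^{1/(\beta+1)}x^{-1/(\beta+1)}(1+o(1))$; differentiating $a(r_x)=x$ implicitly and using $a^{(j)}(r)\asymp(1-r)^{-\beta-1-j}$ yields $r_x^{(k)}\asymp(1-r_x)\,x^{-k}\asymp x^{-1/(\beta+1)-k}$, whence $\abs{r_x^{(k)}x^{k}}\asymp x^{-1/(\beta+1)}\asymp\abs{r_x^{(k-1)}x^{k-1}}$, which is exactly \eqref{eqn:technicalhelper} for $2\le k\le4$. Substituting the expansion of $1-r_x$ into $g_\theta(r_n)=\gamma(1-r_n)^{-\beta}$ produces the first claimed asymptotic. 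For the second I would invoke the identity $b(r_x)=-x/\eta_1(x)$ established in the proof of Proposition~\ref{prop:expand}, so that $n\eta_1(n)=-n^2/b(r_n)$; inserting $b(r_n)\asymp(\beta+1)(\gamma\beta)^{-1/(\beta+1)}n^{(\beta+2)/(\beta+1)}$ then gives $n\eta_1(n)=-\tfrac{(\beta\gamma)^{1/(\beta+1)}}{\beta+1}\,n^{\beta/(\beta+1)}(1+o(1))$, as asserted.
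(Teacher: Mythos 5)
Your proposal is correct and, for the core computations, follows the same route as the paper: the paper likewise verifies admissibility by taking $\delta(t)=(1-t)^\alpha$ with (essentially) $1+\beta/3<\alpha<1+\beta/2$, inverts $a(r_n)=\beta\gamma r_n(1-r_n)^{-\beta-1}=n$ to get $1-r_n=(\beta\gamma/n)^{1/(\beta+1)}(1+o(1))$, bounds $\abs{r_n^{(k)}}\asymp n^{-1/(\beta+1)-k}$ for the technical condition \eqref{eqn:technicalhelper}, and substitutes. Two genuine differences. First, the paper simply asserts ``admissibility follows once we construct an explicit $\delta(t)$'' and never addresses the monotonicity condition for general $\beta$; you correctly identify that for $\beta>1$ the function $\varphi\mapsto\Re g_\theta(re^{i\varphi})$ need not be monotone, and your two-scale argument (Taylor control on $[\delta(r),\varphi_A]$ with $\varphi_A=(1-r)^{1+\epsilon'}$, then the modulus bound $\Re g_\theta\le\gamma\abs{1-re^{i\varphi}}^{-\beta}$ with $\abs{1-re^{i\varphi}}$ increasing, closing under $\epsilon'<\alpha-1$) is a sound and worthwhile repair of a real gap in the paper's proof. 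Second, you obtain $n\eta_1(n)$ via the identity $b(r_x)=-x/\eta_1(x)$ from the proof of Proposition~\ref{prop:expand}, whereas the paper computes $\eta_1(x)=-xr_x'/r_x$ directly from its estimates of $r_n$ and $r_n'$; both give the same constant $-(\beta\gamma)^{1/(\beta+1)}/(\beta+1)$, and your route is arguably cleaner since $b$ is already known in closed form.

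One caution: you state that substituting $1-r_n=(\beta\gamma/n)^{1/(\beta+1)}(1+o(1))$ into $g_\theta(r_n)=\gamma(1-r_n)^{-\beta}$ ``produces the first claimed asymptotic,'' but carrying out this substitution actually gives
\[
  g_\theta(r_n)=\gamma\,(\beta\gamma/n)^{-\beta/(\beta+1)}(1+o(1))=\gamma^{1/(\beta+1)}\beta^{-\beta/(\beta+1)}n^{\beta/(\beta+1)}(1+o(1)),
\]
which differs from the stated $(n\beta^{-1}\gamma^{-1})^{\beta/(\beta+1)}$ by a factor of $\gamma$ (they agree only when $\gamma=1$, e.g.\ in Proposition~\ref{prop:algebraic}). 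This appears to be a constant-tracking slip in the paper itself, propagated into Corollary~\ref{cor:genalgebraic}; your method is the correct one, but you should record the corrected constant rather than assert agreement with the printed formula.
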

\begin{proof}
Admissibility follows once we construct an explicit $\delta(t)$. It suffices to find a function that satisfies the inequalities
\[
  \epsilon \delta(t)^2 b(t) - \log b(t) \to \infty
\]
for all $\epsilon > 0$, and
\[
  \delta(t)^3 R(t,\varphi) \to 0.
\]
For $t \to 1$, we have the lower bound $b(t) \gtrsim_{\gamma, \beta} (1 - t)^{-\beta-2}$ and the upper bound $R(t,\varphi) \lesssim_{\gamma, \beta} (1 - t)^{-\beta-3}$. Thus we see that any $\delta$ of the form $\delta(t) = (1 - t)^\alpha$ with $1 + \frac{\beta}{3} \leq \alpha < 1 + \frac{\beta}{2}$ suffices.

We compute $r_n$ by inverting $n = a(r_n) =  \beta \gamma r_n (1 - r_n)^{-\beta-1}$, so that
\[
  r_n = 1 - (\beta \gamma n^{-1})^{\frac{1}{\beta + 1}} (1 + o(1)).
\]
The derivatives of $r_n$ can be approximated in an analogous way, so that
\[
  \abs{r_n^{(k)}} = C_{\beta, \gamma} n^{\frac{-1}{\beta + 1} - k}
\]
and the technical condition is clear. We then estimate
\[
  g_\theta(r_n) = (n \beta^{-1} \gamma^{-1})^{\frac{\beta}{\beta + 1}} (1 + o(1)).
\]
and
\[
  n \eta_1(n) = - \frac{(\beta \gamma)^{1/(\beta + 1)}}{\beta + 1} n^{\beta/(\beta + 1)} (1 + o(1))
\]
with our estimate for $r_n$ and $r_n'$.
\end{proof}

Next we prove Corollary~\ref{cor:exponential}.

\begin{proposition} \label{prop:subexponential}
    Let $g_\theta(r) = \exp (1 - r)^{-\beta}$ for some $\beta > 0$. Then $g_\theta$ is $\log$-admissible, $r_x$ satisfies the technical condition \ref{eqn:technicalhelper}, and there are asymptotic expansions
    \[
      g_\theta(r_n) = \frac{n}{(\log n)^{1 + 1/\beta}} (1 + (\log n)^{-1/\beta} + (1 + \beta^{-1}) \frac{\log \log x}{\log x}(1 + o(1)))
    \]
    and
    \[
      n \eta_1(n) = - \frac{n}{(\log n)^{1 + 1/\beta}} (1 + (\log n)^{-1/\beta} - (1 + \beta^{-1}) \frac{1}{\log x}(1 + o(1)))
    \]
\end{proposition}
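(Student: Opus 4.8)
The plan is to follow the template of Proposition~\ref{prop:generalalgebraic}: first establish $\log$-admissibility by exhibiting a suitable $\delta$, then invert $a(r_x)=x$ to obtain the asymptotics of $r_x$, and finally substitute into the closed forms for $g_\theta(r_n)$ and $\eta_1$. Throughout I write $u=1-r$ and $f(r)=u^{-\beta}$, so that $g_\theta=e^f$. Since $f'=\beta u^{-\beta-1}$, $f''=\beta(\beta+1)u^{-\beta-2}$ and so on, the chain rule gives $g'=f'e^f$, $g''=(f''+(f')^2)e^f$, $g'''=(f'''+3f'f''+(f')^3)e^f$, whose dominant terms as $r\to1$ are
\[
  a(r)\sim\beta u^{-\beta-1}e^{u^{-\beta}},\quad b(r)\sim\beta^2 u^{-2\beta-2}e^{u^{-\beta}},\quad r^3g'''(r)\sim\beta^3 u^{-3\beta-3}e^{u^{-\beta}},
\]
the last controlling the remainder bound $R\lesssim rg'+3r^2g''+r^3g'''$.

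The essential difference from the algebraic case is that no power $\delta=u^\alpha$ can work here: the factor $e^{u^{-\beta}}$ appearing in both $b$ and $R$ defeats any polynomial gain. Instead I would take $\delta(t)=(1-t)^{\beta+1}\exp\bigl(-c(1-t)^{-\beta}\bigr)$ with a constant $c\in(1/3,1/2)$. With this choice the width condition $\epsilon\delta^2 b-\log b\to\infty$ reduces (using $\log b\sim f$) to $\epsilon e^{(1-2c)f}-f\to\infty$, which holds for every $\epsilon>0$ because $c<1/2$ makes the exponential beat the linear term; the approximation-error condition $\delta^3 R\to0$ reduces to $e^{(1-3c)f}\to0$, which holds because $c>1/3$. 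The divergence conditions are immediate. For monotonicity I would use that $\lvert 1-re^{i\varphi}\rvert$ is increasing in $\lvert\varphi\rvert\in[0,\pi]$ while $\arg(1-re^{i\varphi})$ stays small once $\lvert\varphi\rvert\ll 1-r$; a direct estimate of the argument then shows $\Re(1-re^{i\varphi})^{-\beta}$, and hence $\Re g_\theta(re^{i\varphi})$, decreases as one moves away from the positive real axis.

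For the inversion I set $v=u^{-\beta}$, so that $a(r_x)=x$ becomes, after taking logarithms, $v=\log x-\log\beta-\tfrac{\beta+1}{\beta}\log v-\log(1-u)$. Bootstrapping from $v\sim\log x$ yields $v=\log x-\tfrac{\beta+1}{\beta}\log\log x-\log\beta+(\log x)^{-1/\beta}+\cdots$, where the $(\log x)^{-1/\beta}$ correction comes precisely from the $-\log(1-u)=-\log(1-v^{-1/\beta})$ term. The derivative estimates $r_x'=a'(r_x)^{-1}\sim u^{\beta+1}/(\beta x)$ and $r_x''\sim-a''(r_x)a'(r_x)^{-3}$, together with $a''/a'\sim\beta u^{-\beta-1}$, give $\lvert r_x^{(k)}x^k\rvert\lesssim\lvert r_x^{(k-1)}x^{k-1}\rvert$ with $O(1)$ constants, so the technical condition \ref{eqn:technicalhelper} holds.

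Finally, substituting $a(r_n)=n$ in the form $e^{u^{-\beta}}=n u^{\beta+1}/(\beta r_n)$ gives $g_\theta(r_n)=e^{u^{-\beta}}$ directly, while $b(r_n)\sim\beta n u^{-\beta-1}$ and $n\eta_1(n)=-n^2/b(r_n)$ give $n\eta_1(n)\sim -n u^{\beta+1}/\beta$; inserting the expansion of $v$, hence of $u^{\beta+1}=v^{-1-1/\beta}$, produces series of the stated form. I expect the main obstacle to be bookkeeping precision rather than any single hard estimate: to leading order $g_\theta(r_n)$ and $-n\eta_1(n)$ coincide, so the variance $\sigma_n^2=g_\theta(r_n)+n\eta_1(n)$ from Corollary~\ref{cor:convergencetheorem} is a difference of nearly-equal quantities. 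Extracting it forces the inversion of $v$ to be carried several orders beyond the naive leading term, retaining the $(\log x)^{-1/\beta}$ and $(\log x)^{-1}$ corrections whose mismatch between $g_\theta(r_n)$ and $n\eta_1(n)$ is exactly what survives the cancellation.
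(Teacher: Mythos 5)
Your proposal follows essentially the same route as the paper: the same computations of $a$, $b$ and the remainder bound, an exponentially small $\delta(t) \asymp \exp(-c(1-t)^{-\beta})$ with $c \in (1/3,1/2)$ (indeed with the negative sign that the paper's printed choice $\delta(r)=\exp(\alpha(1-t)^{-\beta})$ evidently intends, since $\delta$ must tend to $0$), the same bootstrapped inversion of $a(r_x)=x$ in the variable $f(x)=(1-r_x)^{-\beta}$ as in Lemma~\ref{lem:expandradius} including the $(\log x)^{-1/\beta}$ correction from the $-\log(1-f^{-1/\beta})$ term, and implicit differentiation to verify condition \ref{eqn:technicalhelper}. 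Your closing observation that the near-cancellation in $\sigma_n^2 = g(r_n) + n\eta_1(n)$ forces the expansion to be carried beyond the leading term is precisely the point the paper itself emphasizes after the proof.
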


\begin{proof}
First we verify that $g_\theta$ is admissible. Monotonicity is obvious. We compute
\[
  a(r) = r g'(r) = r \beta (1 - r)^{-\beta - 1} \exp (1 - r)^{-\beta}
\]
and
\begin{align*}
  b(r) &= r g'(r) + r^2 g''(r) \\
       &= r^2 (\beta (\beta + 1) (1 - r)^{-\beta-2} + \beta^2 (1 - r)^{-2 \beta - 2}) \exp(1 - r)^{-\beta} \\
       &= r^2 \beta^2 (1 - r)^{-2 \beta - 2} \exp (1 - r)^{-\beta} (1 + O_\beta(1 - r)^\beta).
\end{align*}
These diverge at $1$. Once we estimate
\begin{align*}
  R(r,\varphi) &= r g'(r) + 3 r^2 g''(r) + r^3 g'''(r) \\
  &= r^3 \beta^3 (1 - r)^{-3 \beta - 3} \exp (1 - r)^{-\beta} (1 + O_\beta(1 - r)^\beta)
\end{align*}
we see that it remains to choose any $\delta$ that satisfies the pair of inequalities
\[
  \epsilon \delta(r)^2 r^2 \beta^2 (1 - r)^{-2 \beta - 2} \exp(1 - r)^{-\beta} \to \infty
\]
and
\[
  r^3 \beta^3 (1 - r)^{-3 \beta - 3} \exp (1 - r)^{-\beta} \lesssim \delta(r)^{-3}.
\]
Any $\delta$ of the form
\[
  \delta(r) = \exp (\alpha (1 - t)^{-\beta})
\]
with $1/3 \leq \alpha < 1/2$ suffices.

We next need an asymptotic approximation for $r_x$. This is provided by the following lemma.
\begin{lemma} \label{lem:expandradius}
    Let $f(x) := (1 - r_x)^{-\beta}$. Then we have the asymptotic expansion
\begin{multline*}
  f(x) = \log x - (1 + \beta^{-1}) \log \log x - \log \beta \\ + ((\log x)^{-1/\beta} + (1 + \beta^{-1}) \frac{\log \log x}{\log x})(1 + o(1))
\end{multline*}
Furthermore, we have the estimates
\[
  f^{(k)}(x) = (-1)^{k+1} (k-1)! \frac{1}{x^k} (1 - \frac{1 + \beta^{-1}}{\log x}) + O_{\beta,k} (\frac{\log \log x}{x^k (\log x)^2})
\]
\end{lemma}

\begin{proof}
    Once we make the substitution $f(x) = (1 - r_x)^{-\beta}$ in the equation $a(r_x) = x$, we see that $f$ is implicitly defined by the equation
    \[
      x = \beta (1 - f(x)^{-1/\beta}) f(x)^{1 + 1/\beta} \exp f(x).
    \]
    We then substitute $f(x) = \log x - (1 + \beta^{-1}) \log \log x + w$ and observe that $w = ((\log x)^{-1/\beta} + (1 + \beta^{-1}) \frac{\log \log x}{\log x})(1 + o(1))$.
    
    For the estimates on the derivatives of $f$, we differentiate the defining equation for $f$ to find
    \[
      1 = (f(x)^{1/\beta} + \beta (f(x)^{1/\beta} - 1) + \beta(f(x)^{1/\beta} - 1) f(x)) f'(x) \exp f(x).
    \]
We can use the defining equation again to eliminate the exponential term, which gives us
\[
  f'(x) = \frac1x (1 - \frac{\beta^{-1}(1 - f(x)^{-1/\beta})^{-1} + 1}{\beta^{-1}(1 - f(x)^{-1/\beta})^{-1} + 1 + f(x)}).
\]
This gives us the lemma for $k = 1$. For the higher derivatives, we differentiate by parts and apply our earlier asymptotics.
\end{proof}
Note that this lemma also shows that $r_x$ satisfies the technical condition. We apply this formula to $g(r_n)$ to get
\[
g(r_n) = \frac{n}{(\log n)^{1 + 1/\beta}} (1 + (\log n)^{-1/\beta} + (1 + \beta^{-1}) \frac{\log \log x}{\log x}(1 + o(1)))\]
and to $n \eta_1(n)$ to get
\[
  n \eta_1(n) = - \frac{n}{(\log n)^{1 + 1/\beta}} (1 + (\log n)^{-1/\beta} - (1 + \beta^{-1}) \frac{1}{\log x}(1 + o(1))). \qedhere
\]
\end{proof}

Other $g(t)$ can be computed in similar ways. Note that in the proof of Corollary~\ref{cor:exponential}, it was crucial to develop $g(r_n)$ and $n \eta_1(n)$ beyond the first term; this reflects the reduced variance of the number of cycles when there most of the cycles are of logarithmic length.

\section{Large deviation estimates} \label{sec:largedeviation}

The method developed in the previous two sections actually gives more information than a central limit theorem. In fact, it was enough for us to show that
\[
  \mathbb{E} \frac{K_{0n} - \mu_n}{\sigma_n} = \exp(\frac{s^2}{2}(1 + o(1)))
\]
for $s$ arbitrarily close to $0$, but our method applied to all $s$ in a fixed compact set. In this section we will briefly indicate how to use this extra information to prove large deviation estimates for $K_{0n}$.

Let $M(s)$ denote the moment generation function for the renormalized cycle count; i.e.~
\[
  M(s) = \mathbb{E} \exp(s \frac{K_{0n} - \mu_n}{\sigma_n})
\]
and let $\Lambda(s) = \log M(s)$ denote its logarithm. We restate the corollary of Proposition~\ref{prop:expand} as follows.
\begin{proposition}
There are functions $\sigma_n^2$, $\xi(n)$ such that the for all $s = O(\sigma_n)$, we have the estimate
\[
  \Lambda(s) = s^2 / 2 + O(\xi(n) \sigma_n^{-3}) s^3.
\]
\end{proposition}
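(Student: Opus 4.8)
The plan is to recognize this statement as nothing more than Proposition~\ref{prop:expand} re-expressed after centering and rescaling, so the argument should be a change of variables rather than fresh analysis. First I would fix the centering and scaling to be the \emph{exact} first two cumulants: set $\mu_n := \mathbb{E} K_{0n}$ and $\sigma_n^2 := \mathbb{E}[(K_{0n} - \mu_n)^2]$. Since $K_{0n}$ takes values in $\{0,\dots,n\}$ it is bounded, so its Laplace transform $s \mapsto \mathbb{E}\exp(-sK_{0n})$ is entire and $\Lambda_0(s) := \log \mathbb{E}\exp(-sK_{0n})$ is analytic near the origin with $\Lambda_0(0)=0$, $\Lambda_0'(0) = -\mu_n$, and $\Lambda_0''(0) = \sigma_n^2$.

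The first genuine step is to pin down the coefficients appearing in Proposition~\ref{prop:expand}. That proposition writes $\Lambda_0(s) = -g(r_n)(1+o(1)) s + (g(r_n) + n\eta_1(n))(1+o(1)) s^2/2 + r(s)$ for $s$ in a fixed interval $(-M,M)$, with a remainder satisfying $\abs{r(s)} \le C\xi(n)\abs{s}^3$. Because $\Lambda_0$ is analytic, its own Taylor tail is $O(s^3)$ as $s\to 0$ for each fixed $n$; comparing this with the displayed expansion and letting $s \to 0$ (dividing successively by $s$ and by $s^2$) forces the linear coefficient to equal $-\mu_n$ and the coefficient of $s^2/2$ to equal $\sigma_n^2$ exactly. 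This identification simultaneously reproduces the asymptotics $\mu_n = g(r_n)(1+o(1))$ and $\sigma_n^2 = (g(r_n)+n\eta_1(n))(1+o(1))$ of Corollary~\ref{cor:convergencetheorem}, and it lets me rewrite Proposition~\ref{prop:expand} in the clean form $\Lambda_0(s) = -\mu_n s + \sigma_n^2 s^2/2 + r(s)$ with $r(s) = O(\xi(n)s^3)$ uniformly on $(-M,M)$.

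Next I would substitute. By definition $\Lambda(s) = \log \mathbb{E}\exp(s(K_{0n}-\mu_n)/\sigma_n) = -s\mu_n/\sigma_n + \Lambda_0(-s/\sigma_n)$. For $s = O(\sigma_n)$, say $\abs{s} < M\sigma_n$, the argument $-s/\sigma_n$ lies in $(-M,M)$, so the clean form applies and gives
\[
\Lambda(s) = -\frac{s\mu_n}{\sigma_n} + \mu_n \frac{s}{\sigma_n} + \sigma_n^2 \frac{s^2}{2\sigma_n^2} + r(-s/\sigma_n).
\]
The linear terms cancel because $\mu_n$ is the exact mean, the quadratic term collapses to $s^2/2$ because $\sigma_n^2$ is the exact variance, and the remainder is $r(-s/\sigma_n) = O(\xi(n)\abs{s/\sigma_n}^3) = O(\xi(n)\sigma_n^{-3})s^3$. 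This is precisely the claimed estimate, and its uniformity over $\abs{s} < M\sigma_n$ is inherited from the uniform control of the error in Proposition~\ref{prop:expand} (ultimately from Proposition~\ref{prop:generalasymptotic}).

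The only real subtlety, and the step I would treat most carefully, is the reconciliation in the second paragraph. The coefficients in Proposition~\ref{prop:expand} are stated only up to $(1+o(1))$ factors, so naively centering by $g(r_n)$ and scaling by $(g(r_n)+n\eta_1(n))^{1/2}$ would leave an uncancelled linear term of order $g(r_n)\cdot o(1)/\sigma_n$, which need not vanish since $g(r_n)\to\infty$. Taking $\mu_n$ and $\sigma_n^2$ to be the exact mean and variance, and invoking analyticity of the bounded Laplace transform to identify them with the expansion coefficients, is exactly what makes the linear term disappear and the quadratic term normalize to $s^2/2$; everything after that is routine bookkeeping.
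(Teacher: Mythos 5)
Your proposal is correct and is essentially the paper's own route: the paper gives no independent proof, introducing this proposition with the single line that it is a restatement of Proposition~\ref{prop:expand} for the recentered and rescaled variable, which is exactly the substitution $\Lambda(s) = -s\mu_n/\sigma_n + \Lambda_0(-s/\sigma_n)$ you carry out. Your additional step --- taking $\mu_n$, $\sigma_n^2$ to be the exact first two cumulants and using analyticity of the Laplace transform together with the $O(\xi(n)s^3)$ remainder bound to identify them with the coefficients of Proposition~\ref{prop:expand}, so that the linear term cancels exactly rather than up to an $o(1) g(r_n)/\sigma_n$ mismatch --- is a genuine point of care that the paper leaves entirely implicit, and it is precisely what makes the clean form $\Lambda(s)=s^2/2+O(\xi(n)\sigma_n^{-3})s^3$ legitimate.
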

As an immediate consequence, we also have
\[
  \Lambda'(s) = s + O(\xi(n) \sigma_n^{-3}) s^2
\]
and
\[
  \Lambda''(s) = 1 + O(\xi(n) \sigma_n^{-3}) s.
\]
Furthermore, $\Lambda'(s)$ is monotone increasing (hence injective) for such $s$.

\begin{theorem}
For all $a = O(\sigma_n)$ we have
\[
  \mathbb{P}(\abs{\frac{K_{0n} - \mu_n}{\sigma_n} - a} < \epsilon) = (1 - \epsilon^{-2} (1 + \delta)) \exp(- a^2 / 2 + O(\delta + \epsilon a))
\]
where
\[
  \delta = O(\xi(n) \sigma_n^{-3} a)
\]
and the errors are absolute.
\end{theorem}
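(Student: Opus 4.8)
The plan is to use the classical exponential change of measure (Cram\'er tilting), relying entirely on the expansions of $\Lambda$, $\Lambda'$, $\Lambda''$ already recorded before the statement. Write $Y = (K_{0n} - \mu_n)/\sigma_n$ for the renormalized cycle count and, for a parameter $s$, introduce the tilted probability measure $\mathbb{P}_s$ defined by $d\mathbb{P}_s = M(s)^{-1} e^{sY}\, d\mathbb{P}$. Under $\mathbb{P}_s$ the variable $Y$ has mean $\Lambda'(s)$ and variance $\Lambda''(s)$, which are exactly the quantities controlled above. The idea is to center the tilted law precisely at the target value $a$: since $\Lambda'$ is monotone increasing (hence injective) on the relevant range, there is a unique saddle point $s = s_a$ solving $\Lambda'(s_a) = a$, and inverting $\Lambda'(s) = s + O(\xi(n)\sigma_n^{-3})s^2$ gives $s_a = a(1 + O(\delta))$ with $\delta = O(\xi(n)\sigma_n^{-3}a)$.

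First I would record the tilting identity
\[
  \mathbb{P}(\abs{Y - a} < \epsilon) = M(s_a)\, \mathbb{E}_{s_a}\left[ e^{-s_a Y}\, \mathbf{1}_{\{\abs{Y-a}<\epsilon\}} \right].
\]
On the event $\abs{Y - a} < \epsilon$ we have $e^{-s_a Y} = e^{-s_a a} e^{-s_a(Y-a)}$ with $\abs{s_a(Y-a)} < \abs{s_a}\epsilon = O(\epsilon a)$, so the integrand equals $e^{-s_a a}(1 + O(\epsilon a))$ and can be pulled out of the expectation up to that factor. The deterministic prefactor is evaluated through the Legendre transform: using $\Lambda'(s_a) = a$ together with $\Lambda(s) = s^2/2 + O(\xi(n)\sigma_n^{-3})s^3$, one finds $\Lambda(s_a) - s_a a = -a^2/2 + O(\delta)$ (for $a$ bounded; in general the error is $O(\xi(n)\sigma_n^{-3}a^3)$), so that $M(s_a)e^{-s_a a} = \exp(-a^2/2 + O(\delta + \epsilon a))$ after absorbing the correction from the previous step.

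It then remains to estimate the tilted probability $\mathbb{P}_{s_a}(\abs{Y-a}<\epsilon)$, and this is where the announced factor $(1 - \epsilon^{-2}(1+\delta))$ enters. Because the tilted law is centered at $a$ with variance $\Lambda''(s_a) = 1 + O(\xi(n)\sigma_n^{-3}s_a) = 1 + O(\delta)$, Chebyshev's inequality applied under $\mathbb{P}_{s_a}$ gives
\[
  \mathbb{P}_{s_a}(\abs{Y-a} \geq \epsilon) \leq \frac{\mathrm{Var}_{s_a}(Y)}{\epsilon^2} = \epsilon^{-2}(1 + O(\delta)),
\]
and hence $\mathbb{P}_{s_a}(\abs{Y-a}<\epsilon) \geq 1 - \epsilon^{-2}(1+\delta)$. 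Multiplying the three pieces, namely the Gaussian prefactor, the $(1 + O(\epsilon a))$ correction, and this Chebyshev bound, produces the stated estimate, read as a lower bound, which is the useful direction for large deviations.

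The inversion producing $s_a$ and the Legendre computation of the prefactor are routine and follow mechanically from the given expansions of $\Lambda$ and its derivatives. The step requiring the most care is the bookkeeping of the error terms: one must check that the various $O(\delta)$, $O(\epsilon a)$ and saddle-point contributions genuinely combine into the single exponent $O(\delta + \epsilon a)$ without a hidden dependence on $a$ spoiling uniformity. This is exactly why the uniform (in $s$ on compacts) control of $\Lambda''$ and the monotonicity of $\Lambda'$ are indispensable: the latter guarantees that the saddle point is well defined and that the second-moment bound under the tilt is meaningful.
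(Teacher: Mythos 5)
Your proposal is correct and follows essentially the same route as the paper's own proof: the same exponential tilt $d\nu_s(x) = M(s)^{-1}e^{sx}\,d\eta(x)$, the same choice of the saddle point $s$ via $\Lambda'(s) = a$ using monotonicity of $\Lambda'$, the same evaluation of the prefactor $M(s)e^{-sa}$ through the expansion of $\Lambda$, and the same Chebyshev/second-moment bound $\mathbb{E}\abs{Y-a}^2 = \Lambda''(s)$ for the tilted probability. Your parenthetical observation that the exponent error is genuinely $O(\xi(n)\sigma_n^{-3}a^3)$ unless $a$ is bounded even matches the paper's own intermediate estimate $O(\alpha a^3 + \epsilon a)$, so your error bookkeeping is, if anything, slightly more careful than the theorem as stated.
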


\begin{proof}
Let $X_n := (K_{0n} - \mu_n) / \sigma_n$ and let $\eta$ denote the pdf for $X_n$. We define a pdf $\nu_s$ depending on $s \in \R$ by
\[
  d\nu_s(x) = \frac{1}{M(s)} e^{sx} d\eta(x).
\]
Then if $Y$ is a random variable with pdf $\nu_s$, we see that
\[
  \mathbb{P}(\abs{X_n - a} < \epsilon) = M(s) \mathbb{E} e^{-s Y} 1_{\abs{Y - a} < \epsilon}
\]
We want to choose $s$ so that $Y$ has mean $a$. In fact,
\[
  \mathbb{E} Y = M(s)^{-1} \mathbb{E} X_n \exp(s X_n)] = \Lambda'(s)
\]
Therefore, because $\Lambda'$ is injective we solve $s = a + O(\xi(n) \sigma_n^{-3} a^2)$. On the event that $\abs{Y - a} < \epsilon$, we see that $e^{-s Y} = e^{-sa + O(s\epsilon)}$ so that
\[
  \mathbb{P}(\abs{X_n - a} < \epsilon) = \exp(- a^2 / 2 + O(\alpha a^3 + \epsilon a)) \mathbb{P}(\abs{Y - a} < \epsilon).
\]
It is not hard to show that for $s$ chosen so that $a = \mathbb{E}Y$,
\[
  \mathbb{E} \abs{Y - a}^2 = \Lambda''(s)
\]
so that by the second moment method,
\[
  \mathbb{P}(\abs{Y - a} < \epsilon) = 1 - \epsilon^{-2} (1 + O(\xi(n) \sigma_n^{-3} a)),
\]
and the result follows.
\end{proof}

\bibliographystyle{acm}
\bibliography{literatur}

\end{document}